\theoremstyle{plain}
\newtheorem{thm}{Theorem}[section]
\newtheorem{prop}[thm]{Proposition}
\newtheorem{cor}[thm]{Corollary}
\newtheorem{lem}[thm]{Lemma}
\theoremstyle{remark}
\newtheorem{rmk}[thm]{Remark}
\theoremstyle{definition}
\newtheorem{defn}[thm]{Definition}
\def\rmoveio#1#2{
\setlength{\unitlength}{#1}
\begin{picture}(50,30)
\put(5,0){\line(0,1){30}}

{\allinethickness{.8pt}
\put(10,15){\vector(1,0){15}}
\put(25,15){\vector(-1,0){15}}}

\qbezier(30,0)(30,20)(45,20)
\qbezier(45,20)(50,20)(50,15)
\qbezier(50,15)(50,10)(45,10)
\qbezier(45,10)(40,10)(36,14)
\qbezier(33,17)(30,25)(30,30)

\ifnum#2=2
\put(3,28){\path(0,0)(2,2)(4,0)}
\put(28,28){\path(0,0)(2,2)(4,0)}
\put(38,15){\makebox{${\Huge c_{1}}$}}
\fi

\end{picture}
}
\def\rmoveiio#1#2{
\setlength{\unitlength}{#1}
\begin{picture}(60,40)
\put(1,0){\line(0,1){40}}
\put(11,0){\line(0,1){40}}

{\allinethickness{.8pt}
\put(18,20){\vector(1,0){19}}
\put(37,20){\vector(-1,0){19}}}

\qbezier(40,0)(80,20)(40,40)

\qbezier(61,0)(57,3)(53,6)
\qbezier(50,8)(32,20)(50,32)
\qbezier(53,35)(56,37)(61,40)

\ifnum#2=2
\put(2,37){\path(0,0)(3,3)(6,0)}
\put(12,37){\path(0,0)(3,3)(6,0)}
\put(40,37){\path(0,0)(0,3)(3,3)}
\put(60,37){\path(0,0)(0,3)(-3,3)}
\put(47,25){\makebox{${\Huge c_{1}}$}}
\put(47,13){\makebox{${\Huge c_{2}}$}}
\fi

\ifnum#2=3
\put(2,2){\path(0,0)(3,-3)(6,0)}
\put(12,37){\path(0,0)(3,3)(6,0)}
\put(40,3){\path(0,0)(0,-3)(3,-3)}
\put(60,37){\path(0,0)(0,3)(-3,3)}
\put(47,25){\makebox{${\Huge c_{1}}$}}
\put(47,13){\makebox{${\Huge c_{2}}$}}
\fi

\end{picture}
}
\def\rmoveiiio#1#2{
\setlength{\unitlength}{#1}
\begin{picture}(75,30)
\put(0,0){\line(1,1){15}}
\qbezier(15,15)(20,20)(20,30)

\put(10,0){\line(-1,1){4}}
\qbezier(4,6)(-5,15)(5,25)
\put(5,25){\line(1,1){5}}

\qbezier(20,0)(20,10)(16,14)
\put(14,16){\line(-1,1){8}}
\put(4,26){\line(-1,1){4}}

{\allinethickness{.8pt}
\put(28,15){\vector(1,0){15}}
\put(43,15){\vector(-1,0){15}}}

\qbezier(50,0)(50,10)(55,15)
\put(55,15){\line(1,1){15}}

\put(60,0){\line(1,1){5}}
\qbezier(65,5)(75,15)(66,24)
\put(64,26){\line(-1,1){4}}

\put(70,0){\line(-1,1){4}}
\put(64,6){\line(-1,1){8}}
\qbezier(54,16)(50,20)(50,30)

\ifnum#2=2
\put(0,27){\path(0,0)(0,3)(3,3)}
\put(7,30){\path(0,0)(3,0)(3,-3)}
\put(17,27){\path(0,0)(3,3)(6,0)}

\put(10,23){\makebox{${\Huge c_{1}}$}}
\put(10,3){\makebox{${\Huge c_{2}}$}}
\put(20,13){\makebox{${\Huge c_{3}}$}}

\put(47,27){\path(0,0)(3,3)(6,0)}
\put(60,27){\path(0,0)(0,3)(3,3)}
\put(67,30){\path(0,0)(3,0)(3,-3)}

\put(70,23){\makebox{${\Huge c'_{2}}$}}
\put(70,3){\makebox{${\Huge c'_{1}}$}}
\put(60,13){\makebox{${\Huge c'_{3}}$}}
\fi

\end{picture}
}
\def\rmovevio#1#2{
\setlength{\unitlength}{#1}
\begin{picture}(50,30)
\put(5,0){\line(0,1){30}}

{\allinethickness{.8pt}
\put(10,15){\vector(1,0){15}}
\put(25,15){\vector(-1,0){15}}}

\qbezier(30,0)(30,20)(45,20)
\qbezier(45,20)(50,20)(50,15)
\qbezier(50,15)(50,10)(45,10)
\qbezier(45,10)(30,10)(30,30)

\put(34,15){\circle{5}}

\ifnum#2=2
\put(3,28){\path(0,0)(2,2)(4,0)}
\put(28,28){\path(0,0)(2,2)(4,0)}
\put(38,15){\makebox{${\Huge c_{1}}$}}
\fi

\end{picture}
}
\def\rmoveviio#1#2{
\setlength{\unitlength}{#1}
\begin{picture}(60,40)
\put(1,0){\line(0,1){40}}
\put(11,0){\line(0,1){40}}

{\allinethickness{.8pt}
\put(18,20){\vector(1,0){19}}
\put(37,20){\vector(-1,0){19}}}

\qbezier(40,0)(80,20)(40,40)
\qbezier(60,0)(20,20)(60,40)
\put(50,6){\circle{5}}
\put(50,34){\circle{5}}

\ifnum#2=2
\put(2,37){\path(0,0)(3,3)(6,0)}
\put(12,37){\path(0,0)(3,3)(6,0)}
\put(40,37){\path(0,0)(0,3)(3,3)}
\put(60,37){\path(0,0)(0,3)(-3,3)}
\put(47,25){\makebox{${\Huge c_{1}}$}}
\put(47,13){\makebox{${\Huge c_{2}}$}}
\fi

\ifnum#2=3
\put(2,2){\path(0,0)(3,-3)(6,0)}
\put(12,37){\path(0,0)(3,3)(6,0)}
\put(40,3){\path(0,0)(0,-3)(3,-3)}
\put(60,37){\path(0,0)(0,3)(-3,3)}
\put(47,25){\makebox{${\Huge c_{1}}$}}
\put(47,13){\makebox{${\Huge c_{2}}$}}
\fi

\end{picture}
}
\def\rmoveviiio#1#2{
\setlength{\unitlength}{#1}
\begin{picture}(70,30)
\put(0,0){\line(1,1){15}}
\qbezier(15,15)(20,20)(20,30)

\put(10,0){\line(-1,1){5}}
\qbezier(5,5)(-5,15)(5,25)
\put(5,25){\line(1,1){5}}

\qbezier(20,0)(20,10)(15,15)
\put(15,15){\line(-1,1){15}}

\put(5,5){\circle{5}}
\put(15,15){\circle{5}}
\put(5,25){\circle{5}}

{\allinethickness{.8pt}
\put(28,15){\vector(1,0){15}}
\put(43,15){\vector(-1,0){15}}}

\qbezier(50,0)(50,10)(55,15)
\put(55,15){\line(1,1){15}}

\put(60,0){\line(1,1){5}}
\qbezier(65,5)(75,15)(65,25)
\put(65,25){\line(-1,1){5}}

\put(70,0){\line(-1,1){15}}
\qbezier(55,15)(50,20)(50,30)

\put(65,5){\circle{5}}
\put(55,15){\circle{5}}
\put(65,25){\circle{5}}

\ifnum#2=2
\put(0,27){\path(0,0)(0,3)(3,3)}
\put(7,30){\path(0,0)(3,0)(3,-3)}
\put(17,27){\path(0,0)(3,3)(6,0)}

\put(20,13){\makebox{${\Huge c_{1}}$}}

\put(47,27){\path(0,0)(3,3)(6,0)}
\put(60,27){\path(0,0)(0,3)(3,3)}
\put(67,30){\path(0,0)(3,0)(3,-3)}

\put(60,13){\makebox{${\Huge c'_{1}}$}}
\fi

\end{picture}
}
\def\rmovevivo#1#2{
\setlength{\unitlength}{#1}
\begin{picture}(70,30)
\put(0,0){\line(1,1){15}}
\qbezier(15,15)(20,20)(20,30)

\put(10,0){\line(-1,1){5}}
\qbezier(5,5)(-5,15)(5,25)
\put(5,25){\line(1,1){5}}

\qbezier(20,0)(20,10)(15,15)
\put(15,15){\line(-1,1){9}}
\put(4,26){\line(-1,1){4}}

\put(5,5){\circle{5}}
\put(15,15){\circle{5}}

{\allinethickness{.8pt}
\put(28,15){\vector(1,0){15}}
\put(43,15){\vector(-1,0){15}}}

\qbezier(50,0)(50,10)(55,15)
\put(55,15){\line(1,1){15}}

\put(60,0){\line(1,1){5}}
\qbezier(65,5)(75,15)(65,25)
\put(65,25){\line(-1,1){5}}

\put(70,0){\line(-1,1){4}}
\put(64,6){\line(-1,1){9}}
\qbezier(55,15)(50,20)(50,30)

\put(55,15){\circle{5}}
\put(65,25){\circle{5}}

\ifnum#2=2
\put(0,27){\path(0,0)(0,3)(3,3)}
\put(7,30){\path(0,0)(3,0)(3,-3)}
\put(17,27){\path(0,0)(3,3)(6,0)}

\put(20,13){\makebox{${\Huge c_{1}}$}}

\put(47,27){\path(0,0)(3,3)(6,0)}
\put(60,27){\path(0,0)(0,3)(3,3)}
\put(67,30){\path(0,0)(3,0)(3,-3)}

\put(60,13){\makebox{${\Huge c'_{1}}$}}
\fi

\end{picture}
}
\def\rmovetti#1{
\setlength{\unitlength}{#1}
\begin{picture}(60,20)
\put(0,0){\line(1,1){20}}
\put(20,0){\line(-1,1){20}}
\put(10,10){\circle{5}}
\put(17,13){\line(-1,1){4}}

{\allinethickness{.8pt}
\put(23,10){\vector(1,0){14}}
\put(37,10){\vector(-1,0){14}}}

\put(40,0){\line(1,1){20}}
\put(60,0){\line(-1,1){20}}
\put(50,10){\circle{5}}
\put(47,3){\line(-1,1){4}}
\end{picture}
}
\def\rmovetiio#1{
\setlength{\unitlength}{#1}
\begin{picture}(40,20)
\put(5,0){\line(0,1){20}}
{
\put(2.,7){\line(1,0){6}} 
\put(2.,13){\line(1,0){6}} }
{\allinethickness{.8pt}
\put(13,10){\vector(1,0){14}}
\put(27,10){\vector(-1,0){14}}}

\put(35,0){\line(0,1){20}}
\end{picture}
}
\def\rmovetiiio#1#2{
\setlength{\unitlength}{#1}
\begin{picture}(70,20)

\put(0,0){\line(1,1){20}}
\put(20,0){\line(-1,1){9}}
\put(9,11){\line(-1,1){9}}

\put(7,3){\line(-1,1){4}}
\put(13,3){\line(1,1){4}}
\put(3,13){\line(1,1){4}}
\put(17,13){\line(-1,1){4}}

{\allinethickness{.8pt}
\put(23,10){\vector(1,0){14}}
\put(37,10){\vector(-1,0){14}}}

\qbezier(40,5)(48,20)(54,11)
\qbezier(56, 9)(62,0)(70,15)

\qbezier(40,15)(48,0)(55,10)
\qbezier(55,10)(62,20)(70,5)

\put(43,10){\circle{5}}
\put(67,10){\circle{5}}

\ifnum#2=2
\put(0,17){\path(0,0)(0,3)(3,3)}
\put(17,20){\path(0,0)(3,0)(3,-3)}

\put(15,8){\makebox{${\Huge c_{1}}$}}

\put(40,12){\path(0,0)(0,3)(3,3)}
\put(67,15){\path(0,0)(3,0)(3,-3)}

\put(53,3){\makebox{${\Huge c'_{1}}$}}
\fi

\end{picture}
}
\def\wmove#1#2{
\setlength{\unitlength}{#1}
\begin{picture}(75,30)
\put(0,0){\line(1,1){15}}
\qbezier(15,15)(20,20)(20,30)

\put(10,0){\line(-1,1){4}}
\qbezier(4,6)(-5,15)(5,25)
\put(5,25){\line(1,1){5}}

\put(5,25){\circle{5}}

\qbezier(20,0)(20,10)(16,14)
\put(14,16){\line(-1,1){14}}

{\allinethickness{.8pt}
\put(28,15){\vector(1,0){15}}
\put(43,15){\vector(-1,0){15}}}

\qbezier(50,0)(50,10)(55,15)
\put(55,15){\line(1,1){15}}

\put(60,0){\line(1,1){5}}
\qbezier(65,5)(75,15)(66,24)
\put(64,26){\line(-1,1){4}}

\put(70,0){\line(-1,1){14}}
\qbezier(54,16)(50,20)(50,30)

\put(65,5){\circle{5}}

\ifnum#2=2
\put(0,27){\path(0,0)(0,3)(3,3)}
\put(7,30){\path(0,0)(3,0)(3,-3)}
\put(17,27){\path(0,0)(3,3)(6,0)}

\put(10,23){\makebox{${\Huge c_{1}}$}}
\put(10,3){\makebox{${\Huge c_{2}}$}}
\put(20,13){\makebox{${\Huge c_{3}}$}}

\put(47,27){\path(0,0)(3,3)(6,0)}
\put(60,27){\path(0,0)(0,3)(3,3)}
\put(67,30){\path(0,0)(3,0)(3,-3)}

\put(70,23){\makebox{${\Huge c'_{2}}$}}
\put(70,3){\makebox{${\Huge c'_{1}}$}}
\put(60,13){\makebox{${\Huge c'_{3}}$}}
\fi

\end{picture}
}
\def\armoveio#1#2{
\setlength{\unitlength}{#1}
\begin{picture}(70,30)

\put(2,0){\line(0,1){30}}
{\allinethickness{.8pt}
\put(5,0){\line(0,1){30}}}
\put(8,0){\line(0,1){30}}

{\allinethickness{.8pt}
\put(15,15){\vector(1,0){15}}
\put(30,15){\vector(-1,0){15}}}

\qbezier(37,0)(37,10)(40,15)
\qbezier(40,15)(37,20)(37,30)
{\allinethickness{.8pt}
\qbezier(40,0)(40,20)(55,20)
\qbezier(55,20)(60,20)(60,15)
\qbezier(60,15)(60,10)(55,10)
\qbezier(55,10)(50,10)(46,14)
\qbezier(43,17)(40,25)(40,30)
}
\qbezier(43,0)(43,7)(45,10)
\qbezier(45,10)(63,3)(63,15)
\qbezier(63,15)(63,26)(45,21)
\qbezier(45,21)(43,23)(43,30)

\qbezier(48,15)(57,11)(57,15)
\qbezier(57,15)(57,18)(48,15)

\ifnum#2=2
\put(3,28){\path(0,0)(2,2)(4,0)}
\put(38,28){\path(0,0)(2,2)(4,0)}
\put(47,15){\makebox{${\Huge c_{1}}$}}
\fi

\end{picture}
}
\def\armoveiio#1#2{
\setlength{\unitlength}{#1}
\begin{picture}(80,40)
\put(2,0){\line(0,1){40}}
{\allinethickness{.8pt}
\put(5,0){\line(0,1){40}}}
\put(8,0){\line(0,1){40}}

\put(15,0){\line(0,1){40}}
{\allinethickness{.8pt}
\put(18,0){\line(0,1){40}}}
\put(21,0){\line(0,1){40}}

{\allinethickness{.8pt}
\put(25,20){\vector(1,0){15}}
\put(40,20){\vector(-1,0){15}}}

\qbezier(44,0)(51,3)(56,7)
\qbezier(56,7)(40,20)(56,33)
\qbezier(56,33)(52,36)(44,40)

{\allinethickness{.8pt}
\qbezier(50,0)(90,20)(50,40)

\qbezier(71,0)(67,3)(63,6)
\qbezier(60,8)(42,20)(60,32)
\qbezier(63,35)(66,37)(71,40)}

\qbezier(77,0)(72,3)(66,7)
\qbezier(66,7)(82,20)(66,33)
\qbezier(66,33)(72,37)(77,40)

\qbezier(56,0)(58,2)(61,3)
\qbezier(61,3)(64,2)(67,0)

\qbezier(56,40)(58,38)(61,37)
\qbezier(61,37)(64,38)(67,40)

\qbezier(60,12)(48,20)(60,28)
\qbezier(60,12)(72,20)(60,28)

\ifnum#2=2
\put(5,37){\path(0,0)(3,3)(6,0)}
\put(18,37){\path(0,0)(3,3)(6,0)}
\put(50,37){\path(0,0)(0,3)(3,3)}
\put(70,37){\path(0,0)(0,3)(-3,3)}
\put(57,25){\makebox{${\Huge c_{1}}$}}
\put(57,13){\makebox{${\Huge c_{2}}$}}
\fi

\ifnum#2=3
\put(2,2){\path(0,0)(3,-3)(6,0)}
\put(12,37){\path(0,0)(3,3)(6,0)}
\put(40,3){\path(0,0)(0,-3)(3,-3)}
\put(60,37){\path(0,0)(0,3)(-3,3)}
\put(47,25){\makebox{${\Huge c_{1}}$}}
\put(47,13){\makebox{${\Huge c_{2}}$}}
\fi

\end{picture}
}
\def\armoveiiio#1#2{
\setlength{\unitlength}{#1}
\begin{picture}(90,30)

\put(2,0){\line(1,1){5}}
\qbezier(7,5)(-2,15)(7,25)
\put(7,25){\line(-1,1){5}}

\put(8,0){\line(1,1){2}}
\put(10,2){\line(1,-1){2}}

\put(8,30){\line(1,-1){2}}
\put(10,28){\line(1,1){2}}

\put(10,8){\line(1,1){7}}
\put(17,15){\line(-1,1){7}}
\qbezier(10,8)(3,15)(10,22)

\put(18,30){\line(-1,-1){5}}
\put(13,25){\line(1,-1){6}}
\qbezier(19,19)(23,21)(23,30)

\put(18,0){\line(-1,1){5}}
\put(13,5){\line(1,1){6}}
\qbezier(19,11)(23,13)(23,0)

\qbezier(27,0)(27,10)(23,15)
\qbezier(23,15)(27,20)(27,30)

{\allinethickness{.8pt}
\put(5,0){\line(1,1){15}}
\qbezier(20,15)(25,20)(25,30)

\put(15,0){\line(-1,1){4}}
\qbezier(9,6)(0,15)(10,25)
\put(10,25){\line(1,1){5}}

\qbezier(25,0)(25,10)(21,14)
\put(19,16){\line(-1,1){8}}
\put(9,26){\line(-1,1){4}}
}
{\allinethickness{.8pt}
\put(35,15){\vector(1,0){15}}
\put(50,15){\vector(-1,0){15}}}
\qbezier(58,0)(58,10)(62,15)
\qbezier(62,15)(58,20)(58,30)

\qbezier(62,0)(62,9)(65,12)
\put(65,12){\line(1,-1){7}}
\put(72,5){\line(-1,-1){5}}

\qbezier(62,30)(62,21)(65,18)
\put(65,18){\line(1,1){7}}
\put(72,25){\line(-1,1){5}}

\put(68,15){\line(1,-1){7}}
\qbezier(75,8)(81,15)(75,22)
\put(75,22){\line(-1,-1){7}}

\put(73,30){\line(1,-1){2}}
\put(75,28){\line(1,1){2}}

\put(73,0){\line(1,1){2}}
\put(75,2){\line(1,-1){2}}

\put(83,0){\line(-1,1){5}}
\qbezier(78,5)(87,15)(78,25)
\put(78,25){\line(1,1){5}}

{\allinethickness{.8pt}
\qbezier(60,0)(60,10)(65,15)
\put(65,15){\line(1,1){15}}

\put(70,0){\line(1,1){5}}
\qbezier(75,5)(85,15)(76,24)
\put(74,26){\line(-1,1){4}}

\put(80,0){\line(-1,1){4}}
\put(74,6){\line(-1,1){8}}
\qbezier(64,16)(60,20)(60,30)
}

\ifnum#2=2
\put(5,27){\path(0,0)(0,3)(3,3)}
\put(12,30){\path(0,0)(3,0)(3,-3)}
\put(22,27){\path(0,0)(3,3)(6,0)}

\put(15,23){\makebox{${\Huge c_{1}}$}}
\put(15,3){\makebox{${\Huge c_{2}}$}}
\put(25,13){\makebox{${\Huge c_{3}}$}}

\put(57,27){\path(0,0)(3,3)(6,0)}
\put(70,27){\path(0,0)(0,3)(3,3)}
\put(77,30){\path(0,0)(3,0)(3,-3)}

\put(80,23){\makebox{${\Huge c'_{2}}$}}
\put(80,3){\makebox{${\Huge c'_{1}}$}}
\put(70,13){\makebox{${\Huge c'_{3}}$}}
\fi

\end{picture}
}
\def\armovevio#1#2{
\setlength{\unitlength}{#1}
\begin{picture}(60,30)
\put(2,0){\line(0,1){30}}
{\allinethickness{.8pt}
\put(5,0){\line(0,1){30}}}
\put(8,0){\line(0,1){30}}

{\allinethickness{.8pt}
\put(12,15){\vector(1,0){15}}
\put(27,15){\vector(-1,0){15}}}

\qbezier(32,0)(32,23)(53,23)
\qbezier(53,23)(58,23)(58,15)
\qbezier(58,15)(58,4)(40,9)
\qbezier(34,13)(32,18)(32,30)
{\allinethickness{.8pt}
\qbezier(35,0)(35,20)(50,20)
\qbezier(50,20)(55,20)(55,15)
\qbezier(55,15)(55,8)(41,12)
\qbezier(37,17)(35,20)(35,30)}

\qbezier(38,0)(38,17)(50,17)
\qbezier(50,17)(52,17)(52,15)
\qbezier(52,15)(52,12)(43,14)
\qbezier(39,19)(38,22)(38,30)


\ifnum#2=2
\put(3,28){\path(0,0)(2,2)(4,0)}
\put(33,28){\path(0,0)(2,2)(4,0)}
\put(43,15){\makebox{${\Huge c_{1}}$}}
\fi

\end{picture}
}
\def\armoveviio#1#2{
\setlength{\unitlength}{#1}
\begin{picture}(80,40)
\put(2,0){\line(0,1){40}}
{\allinethickness{.8pt}
\put(5,0){\line(0,1){40}}}
\put(8,0){\line(0,1){40}}

\put(15,0){\line(0,1){40}}
{\allinethickness{.8pt}
\put(18,0){\line(0,1){40}}}
\put(21,0){\line(0,1){40}}

{\allinethickness{.8pt}
\put(25,20){\vector(1,0){15}}
\put(40,20){\vector(-1,0){15}}}

\qbezier(44,0)(88,20)(44,40)
\qbezier(56,0)(92,20)(56,40)

{\allinethickness{.8pt}
\qbezier(50,0)(90,20)(50,40)

\qbezier(71,0)(67,3)(64,5)
\qbezier(59,9)(42,20)(59,31)
\qbezier(64,35)(67,37)(71,40)
 }

\put(65,0){\line(-3,2){4}}
\put(77,0){\line(-3,2){10.5}}

\qbezier(56,7)(38,20)(56,33)
\qbezier(61,11)(46,20)(61,29)

\put(61,37){\line(3,2){5}}
\put(67,33){\line(3,2){10}}


\ifnum#2=2
\put(2,37){\path(0,0)(3,3)(6,0)}
\put(15,37){\path(0,0)(3,3)(6,0)}
\put(50,37){\path(0,0)(0,3)(3,3)}
\put(70,37){\path(0,0)(0,3)(-3,3)}
\put(57,25){\makebox{${\Huge c_{1}}$}}
\put(57,13){\makebox{${\Huge c_{2}}$}}
\fi

\ifnum#2=3
\put(2,2){\path(0,0)(3,-3)(6,0)}
\put(12,37){\path(0,0)(3,3)(6,0)}
\put(40,3){\path(0,0)(0,-3)(3,-3)}
\put(60,37){\path(0,0)(0,3)(-3,3)}
\put(47,25){\makebox{${\Huge c_{1}}$}}
\put(47,13){\makebox{${\Huge c_{2}}$}}
\fi

\end{picture}
}
\def\armoveviiio#1#2{
\setlength{\unitlength}{#1}
\begin{picture}(90,30)
\put(2,0){\line(1,1){15}}
\qbezier(17,15)(23,21)(23,30)
\put(8,0){\line(1,1){15}}
\qbezier(23,15)(28,21)(28,30)

\put(18,0){\line(-1,1){5}}
\put(12,0){\line(-1,1){2}}

\qbezier(10,8)(3,15)(10,22)
\put(10,22){\line(1,1){8}}
\qbezier(7,5)(-2,15)(7,25)
\put(7,25){\line(1,1){5}}

\qbezier(23,0)(23,8)(20,12)
\qbezier(27,0)(27,10)(23,15)
\put(20,18){\line(-1,1){7}}
\put(17,15){\line(-1,1){7}}
\put(10,28){\line(-1,1){2}}
\put(7,25){\line(-1,1){5}}

{\allinethickness{.8pt}
\put(5,0){\line(1,1){15}}
\qbezier(20,15)(25,20)(25,30)

\put(15,0){\line(-1,1){3.5}}
\qbezier(8.5,7)(0,15)(10,25)
\put(10,25){\line(1,1){5}}

\qbezier(25,0)(25,10)(21.5,13.)
\put(18.5,16.5){\line(-1,1){7}}
\put(8.5,26.5){\line(-1,1){4}}
}
{\allinethickness{.8pt}
\put(35,15){\vector(1,0){15}}
\put(50,15){\vector(-1,0){15}}}
\qbezier(58,0)(58,10)(62,15)
\put(62,15){\line(1,1){15}}

\qbezier(62,0)(62,9)(65,12)
\put(65,12){\line(1,1){18}}

\put(67,0){\line(1,1){8}}
\put(73,0){\line(1,1){5}}
\qbezier(75,8)(81,15)(75,22)
\qbezier(78,5)(87,15)(78,25)
\put(72,25){\line(-1,1){5}}
\put(75,28){\line(-1,1){2}}

\put(77,0){\line(-1,1){2}}
\put(83,0){\line(-1,1){5}}
\put(72,5){\line(-1,1){7}}
\put(75,8){\line(-1,1){7}}
\qbezier(62,15)(58,20)(58,30)
\qbezier(62,30)(62,21)(65,18)

{\allinethickness{.8pt}
\qbezier(60,0)(60,10)(65,15)
\put(65,15){\line(1,1){15}}

\put(70,0){\line(1,1){5}}
\qbezier(75,5)(85,15)(76.7,23.5)
\put(73.5,26.5){\line(-1,1){3.5}}

\put(80,0){\line(-1,1){3.5}}
\put(73.5,6.5){\line(-1,1){7}}
\qbezier(63,16.5)(60,20)(60,30)
}

\ifnum#2=2
\put(5,27){\path(0,0)(0,3)(3,3)}
\put(12,30){\path(0,0)(3,0)(3,-3)}
\put(22,27){\path(0,0)(3,3)(6,0)}


\put(57,27){\path(0,0)(3,3)(6,0)}
\put(70,27){\path(0,0)(0,3)(3,3)}
\put(77,30){\path(0,0)(3,0)(3,-3)}

\fi

\end{picture}
}
\def\armovevivo#1#2{
\setlength{\unitlength}{#1}
\begin{picture}(90,30)
\put(2,0){\line(1,1){15}}
\qbezier(17,15)(23,21)(23,30)
\put(8,0){\line(1,1){15}}
\qbezier(23,15)(28,21)(28,30)

\put(18,0){\line(-1,1){5}}
\put(12,0){\line(-1,1){2}}
\qbezier(10,8)(3,15)(10,22)
\put(13,25){\line(1,1){5}}
\qbezier(7,5)(-2,15)(7,25)
\put(10,28){\line(1,1){2}}

\qbezier(23,0)(23,8)(20,12)
\qbezier(27,0)(27,10)(23,15)
\put(20,18){\line(-1,1){7}}
\put(17,15){\line(-1,1){7}}
\put(10,28){\line(-1,1){2}}
\put(7,25){\line(-1,1){5}}

{\allinethickness{.8pt}
\put(5,0){\line(1,1){15}}
\qbezier(20,15)(25,20)(25,30)

\put(15,0){\line(-1,1){3.5}}
\qbezier(8.5,7)(0,15)(10,25)
\put(10,25){\line(1,1){5}}

\qbezier(25,0)(25,10)(21.5,13.)
\put(18.5,16.5){\line(-1,1){7}}
\put(8.5,26.5){\line(-1,1){4}}
}
{\allinethickness{.8pt}
\put(35,15){\vector(1,0){15}}
\put(50,15){\vector(-1,0){15}}}
\qbezier(58,0)(58,10)(62,15)
\put(62,15){\line(1,1){15}}

\qbezier(62,0)(62,9)(65,12)
\put(65,12){\line(1,1){18}}

\put(67,0){\line(1,1){5}}
\put(73,0){\line(1,1){2}}
\qbezier(75,8)(81,15)(75,22)
\qbezier(78,5)(87,15)(78,25)
\put(72,25){\line(-1,1){5}}
\put(75,28){\line(-1,1){2}}

\put(77,0){\line(-1,1){2}}
\put(83,0){\line(-1,1){5}}
\put(72,5){\line(-1,1){7}}
\put(75,8){\line(-1,1){7}}
\qbezier(62,15)(58,20)(58,30)
\qbezier(62,30)(62,21)(65,18)

{\allinethickness{.8pt}
\qbezier(60,0)(60,10)(65,15)
\put(65,15){\line(1,1){15}}

\put(70,0){\line(1,1){5}}
\qbezier(75,5)(85,15)(76.7,23.5)
\put(73.5,26.5){\line(-1,1){3.5}}

\put(80,0){\line(-1,1){3.5}}
\put(73.5,6.5){\line(-1,1){7}}
\qbezier(63,16.5)(60,20)(60,30)
}

\ifnum#2=2
\put(5,27){\path(0,0)(0,3)(3,3)}
\put(12,30){\path(0,0)(3,0)(3,-3)}
\put(22,27){\path(0,0)(3,3)(6,0)}


\put(57,27){\path(0,0)(3,3)(6,0)}
\put(70,27){\path(0,0)(0,3)(3,3)}
\put(77,30){\path(0,0)(3,0)(3,-3)}

\fi

\end{picture}
}
\def\armovetiiio#1#2{
\setlength{\unitlength}{#1}
\begin{picture}(90,20)
{\allinethickness{.8pt}
\put(5,0){\line(1,1){20}}
\put(25,0){\line(-1,1){9}}
\put(14,11){\line(-1,1){9}}}

\put(1,0){\line(1,1){4}}
\qbezier(5,4)(7,6)(10,5)
\qbezier(10,5)(13,4)(15,6)
\qbezier(9,0)(11,2)(10,4)
\qbezier(9,6)(9,8)(11,10)

\qbezier(15,14)(17,16)(20,15)
\qbezier(20,15)(23,14)(25,16)
\put(25,16){\line(1,1){3}}
\qbezier(19,10)(21,12)(20,14)
\qbezier(19,16)(19,18)(21,20)

\put(29,0){\line(-1,1){4}}
\qbezier(25,4)(23,6)(20,5)
\qbezier(20,5)(17,4)(15,6)
\qbezier(21,0)(19,2)(20,4)
\qbezier(21,6)(21,8)(19,10)

\qbezier(15,14)(13,16)(10,15)
\qbezier(10,15)(7,14)(5,16)
\put(5,16){\line(-1,1){3}}
\qbezier(11,10)(9,12)(10,14)
\qbezier(11,16)(11,18)(9,20)


{\allinethickness{.8pt}
\put(30,10){\vector(1,0){15}}
\put(45,10){\vector(-1,0){15}}}

{\allinethickness{.8pt}
\qbezier(53,2)(63,22)(69,11)
\qbezier(71, 9)(75,3)(80,8)
\put(83.5,12.5){\line(2,3){3}}

\qbezier(59.5,8.5)(65,2)(70,10)
\put(56.5,12.5){\line(-2,3){3}}
\qbezier(70,10)(77,22)(87,2)

}

\qbezier(50,2)(62,25)(70,14)
\qbezier(70,14)(78,25)(90,2)

\qbezier(56,2)(63,18)(67,10)
\qbezier(67,10)(64,7)(61,10)
\put(58,14){\line(-2,3){2}}

\put(51,16.5){\line(2,-3){4.2}}
\qbezier(58.5,6.5)(64,0)(70,6)
\qbezier(70,6)(76,0)(82.,6.)
\put(85,10.){\line(2,3){4.2}}

\qbezier(84,2)(76,17)(73,10)
\qbezier(73,10)(76,7)(79,10)
\put(82,14){\line(2,3){2}}




\ifnum#2=2
\put(5,17){\path(0,0)(0,3)(3,3)}
\put(22,20){\path(0,0)(3,0)(3,-3)}

\put(20,8){\makebox{${\Huge c_{1}}$}}

\put(55,12){\path(0,0)(0,3)(3,3)}
\put(82,15){\path(0,0)(3,0)(3,-3)}

\put(68,3){\makebox{${\Huge c'_{1}}$}}
\fi

\end{picture}
}
\newcommand{\R}{\mathbb R}
\newcommand{\Z}{\mathbb Z}
\begin{document}

\title[Virtual links which are equivalent as twisted links]{Virtual links which are equivalent \\  as twisted links}

\author[Naoko Kamada]{Naoko Kamada}
\address{Graduate School of Natural Sciences,  Nagoya City University,
Mizuho-ku, Nagoya, Aichi 467-8501, Japan}
\email{kamada@nsc.nagoya-cu.ac.jp}


\author[Seiichi Kamada]{Seiichi Kamada}
\address{Department of Mathematics,
Osaka University,
Toyonaka, Osaka 560-0043, Japan}
\email{kamada@math.sci.osaka-u.ac.jp}

\subjclass[2010]{57M25}

\keywords{Virtual links, Twisted links}

\date{\today}

\begin{abstract}
A virtual link is a generalization of a classical link that is defined as an equivalence class of certain diagrams, called virtual link diagrams. It is further generalized to a twisted link. Twisted links are in one-to-one correspondence with stable equivalence classes of links in oriented thickenings of (possibly non-orientable) closed surfaces. By definition, equivalent virtual links are also equivalent as twisted links. In this paper, we discuss when two virtual links are equivalent as twisted links, and give a necessary and sufficient condition for this to be the case.
\end{abstract}

\maketitle

\section{Introduction}
\label{sect:intro}

A virtual link is a generalization of a classical link introduced by Kauffman \cite{rkauD}, which is defined as an equivalence class of certain diagrams, called virtual link diagrams.
Virtual link theory is quite natural when we discuss Gauss chord diagrams, since every Gauss chord diagram is realized as a virtual link diagram up to virtual Reidemeister moves \cite{rGPV, rkauD}. 
Moreover, virtual links are in one-to-one correspondence with  abstract links on oriented surfaces \cite{rkk1},
and in one-to-one correspondence
with stable equivalence classes of links in oriented thickenings of oriented closed surfaces \cite{rCKS, rkk1}.
It is known that the set of classical links is a subset of the set of virtual links, i.e., two classical link diagrams are equivalent  as virtual links  if and only if they are equivalent as classical links \cite{rGPV, rkauD, rKup}.  This fact is obtained by considering knot groups with peripheral structures \cite{rGPV}, or by assuming a stronger fact due to Kuperburg \cite{rKup} that a virtual link has a unique irreducible representative as a link in an oriented thickening of an oriented surface.
For details and related topics on virtual knot theory, refer to \cite{rDye, rGPV, rkk1, rkauD, rMant}.

Bourgoin \cite{rbor} generalized virtual links to twisted links. Twisted links are in one-to-one correspondence with abstract links  on  surfaces \cite{rbor, rkk7},
and in one-to-one correspondence
with  stable  equivalence classes of links in oriented thickenings of closed surfaces \cite{rbor, rkk7}.

A {\it virtual link diagram} is a link diagram in $\R^2$  that   may have some {\it virtual crossings}, which are crossings without over/under information but which are decorated with a small circle surrounding it.
A  {\it twisted link diagram\/}
is a virtual link diagram possibly with {\it bars\/} on arcs.
Referring to Figure \ref{fgrmoves}, the  moves R1, R2, R3 are called {\it classical Reidemeister moves},
the moves V1, \dots, V4 are called {\it virtual Reidemeister moves}, and the moves T1, T2, T3 are called {\it twisted Reidemeister moves}. All of these are called {\it extended Reidemeister moves}.

A {\it virtual link} is an equivalence class of virtual link diagrams under classical and virtual Reidemeister moves.
A {\it twisted link} is an equivalence class of twisted link diagrams under extended Reidemeister moves.

\begin{figure}[h]
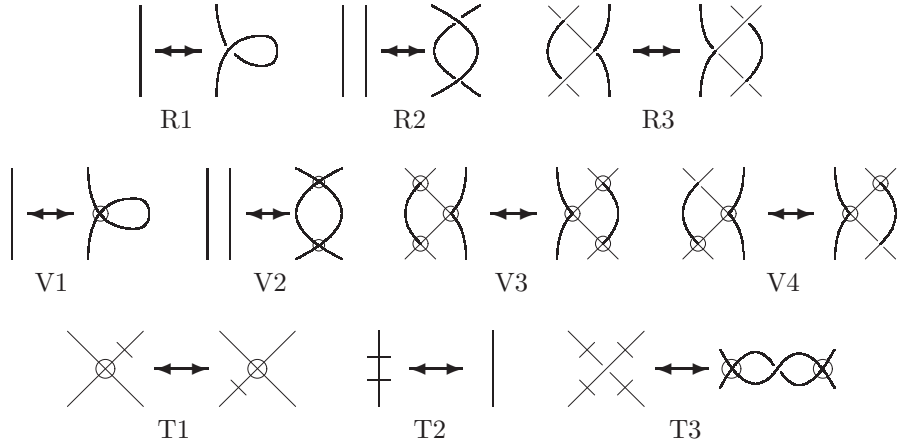

\centerline{
\rmoveio{.4mm}{1}\hspace{.6cm}\rmoveiio{.30mm}{1}\hspace{.7cm}\rmoveiiio{.4mm}{1}}
\centerline{
R1 \hspace{2.4cm} R2 \hspace{2.6cm} R3 \phantom{MM}
}

\vspace{0.5cm}

\centerline{
\rmovevio{.4mm}{1}\hspace{.5cm}\rmoveviio{.30mm}{1}\hspace{.6cm}\rmoveviiio{.4mm}{1}
\hspace{.5cm}\rmovevivo{.4mm}{1}
}
\centerline{
V1 \hspace{2.2cm} V2 \hspace{2.5cm} V3 \hspace{2.9cm} V4 \phantom{MM}
}

\vspace{0.5cm}

\centerline{
\rmovetti{.5mm}\hspace{.5cm}\rmovetiio{.5mm}\hspace{.5cm}\rmovetiiio{.5mm}{1}
}

\centerline{
T1 \hspace{2.7cm} T2 \hspace{2.7cm} T3 \phantom{M}
}
\caption{Classical, virtual and twisted Reidemeister moves}\label{fgrmoves}
\end{figure}

A geometric interpretation for a twisted link diagram $D$ is obtained by considering its associated abstract link diagram $A(D)$
as shown in Figure~\ref{fig:figabst}, which is a link diagram  on a compact surface \cite{rbor, rkk1, rkk7}.  The figure shows the local correspondence between $D$ and $A(D)$, and an example. 
Note that a bar of $D$ implies a half-twist of the ambient surface of $A(D)$.

\begin{figure}[h]
\centerline{
\includegraphics[width=9cm]{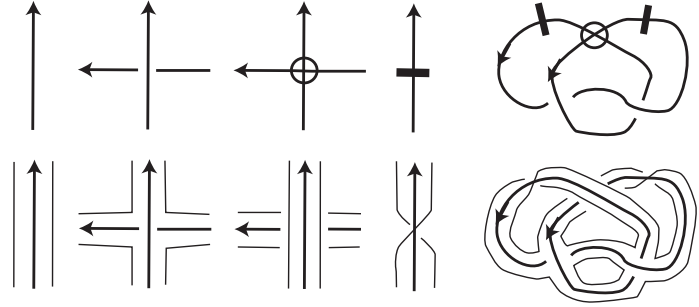}
}
\vspace{-0.2cm}
\caption{A twisted link diagram and its associated abstract link diagram}
\label{fig:figabst} 
\end{figure}


By definition, virtual link diagrams are twisted link diagrams, and if two virtual link diagrams are equivalent as virtual links then they are equivalent as twisted links. Thus the inclusion map
$$\iota : \{\text{virtual link diagrams}\} \rightarrow \{\text{twisted link diagrams}\}$$
yields a natural map
\begin{equation*}
f : \{\text{virtual links}\} \rightarrow \{\text{twisted links}\}.
\end{equation*}

In this paper, we discuss when two elements are mapped to the same element by $f$,
and give a necessary and sufficient condition for this to be the case.
This clarifies a remark made in \cite[p.1251]{rbor}, which claims  that virtual link theory injects into the theory of links in oriented thickenings; see Remark \ref{rmk-rbor}. 

For a virtual link $L$,  let $s(L)$ denote the virtual link represented by a
diagram $s(D)$  that   is obtained from a diagram $D$ of $L$ by a reflection along a line  in  $\mathbb{R}^2$ and by switching over/under information on all classical crossings.  See Figure~\ref{fig:fgr}, where $r$ is a reflection along a line  in  $\mathbb{R}^2$ and $c$ is  switching over/under information.

\begin{figure}[h]
\centerline{
\includegraphics[width=8.0cm]{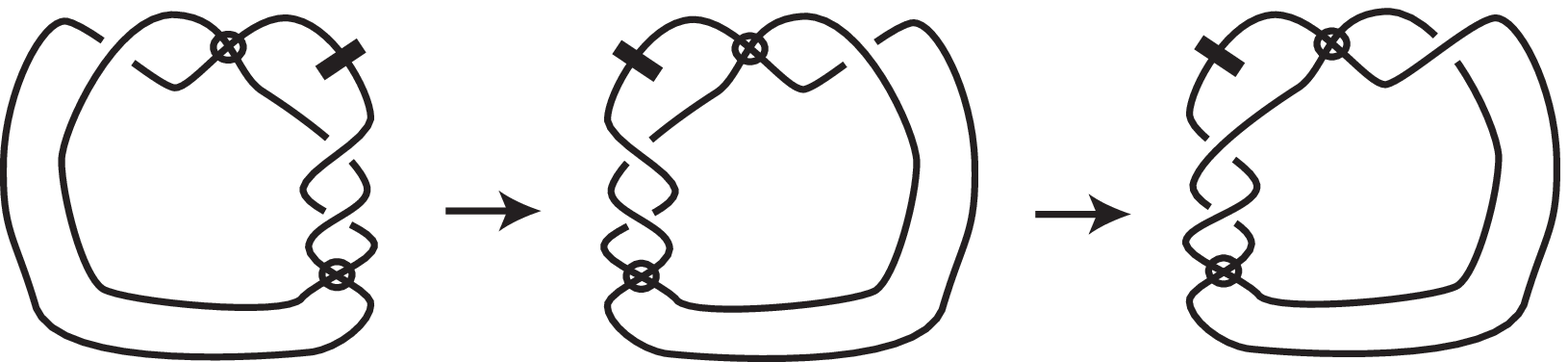}
}
\setlength{\unitlength}{1mm}
\begin{picture}(90,1)(0,0)
\put(14,0){$D$}
\put(40,0){$r(D)$}
\put(66,0){$s(D) = c \circ r (D)$}
\put(29,15){$r$}
\put(59,15){$c$}
\end{picture}
\caption{}\label{fig:fgr}
\end{figure}

\begin{thm}\label{thm1}
Two virtual knots $L$ and  $L'$ are equivalent as twisted knots if and only if
$L'$ is equivalent to $L$ or $s(L)$ as a virtual knot.
\end{thm}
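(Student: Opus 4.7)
My strategy is to introduce an \emph{orientation double cover} map
$\Phi:\{\text{twisted links}\}\to\{\text{virtual links}\}$
and analyze its behavior on the image of $f$. Via the correspondence of \cite{rbor, rkk7}, a twisted link is a stable equivalence class of links $L\subset F\tilde\times I$ with $F$ a closed (possibly non-orientable) surface and $F\tilde\times I$ its oriented thickening. The orientation double cover $\pi:\tilde F\to F$ has $\tilde F$ orientable and induces a double covering $\tilde F\times I\to F\tilde\times I$ of oriented thickenings, so $\Phi(L):=\pi^{-1}(L)\subset\tilde F\times I$ is a link in an oriented thickening of an orientable surface, hence a virtual link. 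One first checks that $\Phi$ descends to twisted link classes: classical and virtual Reidemeister moves lift on each sheet; the twisted moves T1--T3 lift because a bar (i.e.\ a M\"obius band in $F$) pulls back to an ordinary orientable band in $\tilde F$; and a crosscap stabilization of $F$ lifts to an orientable handle stabilization of $\tilde F$ (as can be verified by cutting and gluing the orientation cover of $N_1$, which is an annulus).

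\textbf{Necessity.} For a virtual knot $L$ represented on a connected orientable $\Sigma$, one has $\tilde\Sigma=\Sigma\sqcup\Sigma$, the two sheets carrying opposite natural orientations; the lift $\Phi(L)$ consists of one copy of $L$ on each sheet. Re-identifying the reversed-orientation sheet with the standard oriented $\Sigma\times I$ requires reversing both the surface orientation and the $I$-coordinate, which is exactly the diagrammatic operation $s=c\circ r$. Consequently
\[
\Phi(L)=L\sqcup s(L)
\]
as a $2$-component virtual link. If virtual knots $L, L'$ are equivalent as twisted knots then $\Phi(L)=\Phi(L')$, so $L\sqcup s(L)$ and $L'\sqcup s(L')$ coincide as virtual $2$-component links. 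The unordered multiset of virtual-knot components of a virtual link is plainly a virtual-link invariant, hence $\{L,s(L)\}=\{L',s(L')\}$ as virtual knots, and therefore $L'$ is equivalent to $L$ or to $s(L)$.

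\textbf{Sufficiency and main obstacle.} For the converse only $L\sim s(L)$ as twisted knots requires proof. Represent $L$ by $L\subset\Sigma\times I$ with $\Sigma$ orientable and attach a crosscap to $\Sigma$ in a disk disjoint from the projection of $L$, obtaining a twisted stabilization $L\subset(\Sigma\# N_1)\tilde\times I$. The unoriented surface $\Sigma\# N_1$ coincides with $\bar\Sigma\# N_1$, and a closed non-orientable surface has a unique oriented thickening; consequently the identical stabilized link is also the crosscap stabilization of $L\subset\bar\Sigma\times\bar I$, which represents $s(L)$ as a virtual link. Thus $L$ and $s(L)$ admit a common twisted stabilization, so $L\sim s(L)$ as twisted knots. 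I expect the principal technical hurdle to lie in the first paragraph, namely the verification that $\Phi$ is well defined---in particular the careful treatment of bars and of crosscap stabilizations under the orientation cover; once this is in place, the remaining deductions (the formula $\Phi(L)=L\sqcup s(L)$, the multiset-of-components argument, and the crosscap-stabilization identification for sufficiency) are essentially formal.
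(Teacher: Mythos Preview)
Your proposal is correct, and the necessity direction is essentially the paper's own argument: your orientation double cover $\Phi$ is precisely the double covering of \cite{rkk7}, which the paper invokes as Theorem~\ref{thm5}; the paper likewise uses $\Phi(L)=L\sqcup s(L)$ and then the uniqueness of maximal split decompositions (Lemma~\ref{uniquemiximamdecomp}), which for knots reduces to your multiset-of-components observation. The well-definedness of $\Phi$ that you flag as the main obstacle is exactly what the paper imports from \cite{rkk7} rather than reproving.

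Where you genuinely diverge is sufficiency. The paper proves $L\sim s(L)$ as twisted links (Proposition~\ref{prop1}) by a long diagrammatic argument: slice $D$ into standard pieces, reroute everything across the $y$-axis by detour moves, then insert and cancel pairs of bars using T2 and T3 to realise the reflection-plus-crossing-switch. Your argument is the geometric counterpart: stabilise $L\subset\Sigma\times I$ by a crosscap, observe that the resulting non-orientable surface has a unique oriented thickening, and that restricting it back to the orientable part recovers $\Sigma\times I$ or $\bar\Sigma\times\bar I$ depending on which local orientation of $\Sigma$ one fixes. This is shorter and more conceptual, and the paper itself notes (Remark~\ref{rmk-rbor}) that a surface-based alternative exists; what the paper's diagrammatic route buys is self-containment at the level of extended Reidemeister moves, without appealing to the link-in-thickening correspondence or to the fact that crosscap (de)stabilisation is part of twisted equivalence.
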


This theorem is a special case of our main theorem (Theorem~\ref{thm3}),  stated in Section~\ref{sect:def}.

It is known that there is a virtual knot $L$ such that $L$ and $s(L)$ are not equivalent as virtual knots \cite{rSaw}.
Thus the map $f$ is not injective.

A link diagram (without virtual crossings nor bars) is referred to as a
{\it classical link diagram}, and a {\it classical link} means an equivalence class of classical link diagrams under classical Reidemeister moves. 
Recall that the set of classical links is a subset of the set of virtual links. It is also a subset of the set of twisted links. 
As we will see below the following holds. 

\begin{thm}\label{thm2}
The map $f$ restricted to the set of classical links is injective, i.e., two classical links are equivalent  as twisted links  if and only if they are equivalent as classical links.
\end{thm}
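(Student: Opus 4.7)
The forward direction is immediate, since classical Reidemeister moves form a subset of the extended Reidemeister moves. The plan for the reverse direction is to combine the main theorem (Theorem~\ref{thm3}) with the known injectivity of the map $\{\text{classical links}\}\to\{\text{virtual links}\}$ from \cite{rGPV, rkauD, rKup}. So suppose $L$ and $L'$ are classical links that become equivalent as twisted links; we aim to show they are already equivalent as classical links.

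By Theorem~\ref{thm3} (the link generalization of Theorem~\ref{thm1}), the twisted equivalence forces a virtual equivalence between $L'$ and one of finitely many virtual-link variants of $L$ built from the reflection-and-crossing-switch operation $s$ (possibly applied componentwise, according to the precise form of Theorem~\ref{thm3}). The crux of the argument is that, when $L$ is classical, each such variant coincides with $L$ on the classical level. Indeed, if $D$ is a classical diagram of $L\subset \mathbb{R}^3$, then the planar reflection $r$ extends to a reflection of $\mathbb{R}^3$ through a vertical plane, realizing a mirror image of $L$; similarly, the crossing-switch $c$ corresponds to reflection of $\mathbb{R}^3$ through the horizontal projection plane, also realizing a mirror image of $L$. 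Their composition $s=c\circ r$ is therefore a rotation of $\mathbb{R}^3$ by $\pi$ about the line of intersection of the two planes, and is ambient isotopic to the identity. Hence $s(L)\sim L$ as classical links, and the same orientation-preserving reasoning applies to any componentwise variant of $s$. Since no virtual crossings or bars are introduced in the process, each variant remains a classical diagram, and we conclude $L'\sim L$ as virtual links.

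Finally, applying the cited injectivity of $\{\text{classical links}\}\to\{\text{virtual links}\}$ upgrades the virtual equivalence $L'\sim L$ to a classical equivalence, completing the proof. The only potential obstacle is pinning down the exact list of variants produced by the main theorem and verifying uniformly that each is trivial for classical $L$; the reflection-composition argument above is intended to handle all such variants in one stroke, so once Theorem~\ref{thm3} is in hand this reduction should be routine.
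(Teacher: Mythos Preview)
Your proposal is correct and follows essentially the same route as the paper. The paper deduces Theorem~\ref{thm2} in one line from Theorem~\ref{thm3} by noting that for classical links $s$-congruence coincides with classical equivalence; your argument simply unpacks this claim, supplying the rotation interpretation of $s=c\circ r$ to show $s(L)\sim L$ classically and then invoking the known injection of classical links into virtual links---exactly the ingredients the paper is relying on implicitly.
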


In this paper all (classical, virtual or twisted) links are oriented.  A link is called a knot if it consists of one component.
Although virtual links are equivalence classes of virtual link diagrams, we often say that two virtual links $L$ and $L'$ are {\it equivalent as virtual links} (or {\it as twisted links}, respectively) if their representatives are equivalent as virtual link diagrams (or as twisted link diagrams, respectively).  

The paper is organized as follows:
In Section~\ref{sect:def} we give necessary definitions and state the main results (Proposition~\ref{prop1} and Theorem~\ref{thm3}).  Proofs of the latter are given in Section~\ref{sect:proof}.

This work was supported by JSPS KAKENHI Grant Numbers JP15K04879, JP26287013,  JP19K03496 and JP19H01788.

\section{Definitions and the main theorem}
\label{sect:def}

Let $D$ be a (classical, virtual or twisted) diagram.
A {\it split decomposition} of $D$ is a collection of mutually disjoint subdiagrams $D_1, \dots, D_n$ such that
$D = D_1 \cup \dots \cup D_n$ for some $n \geq 1$.
We denote it by $D = D_1 \sqcup \dots \sqcup D_n$.

Let $L$ be a (classical, virtual or twisted) link.
A {\it split decomposition} of $L$ is a collection of sublinks $L_1, \dots, L_n$ such that
there is a diagram $D$ of $L$ with a
split decomposition
$D = D_1 \sqcup \dots \sqcup D_n$ such that $L_i$ is represented by $D_i$ for $i=1,\dots, n$.
We denote it by $L = L_1 \sqcup \dots \sqcup L_n$.
A (classical, virtual or twisted) link $L$ is {\it splittable} if
there is a split decomposition $L= L_1 \sqcup \dots \sqcup L_n$ with $n \geq 2$; otherwise, $L$ is {\it non-splittable}.
A split decomposition $L = L_1 \sqcup \dots \sqcup L_n$ is called {\it  maximal } if
for each $i=1,\dots, n$, $L_i$ is non-splittable. Note that a    maximal  split decomposition is unique up to reordering (Lemma~\ref{uniquemiximamdecomp}).

For a (classical, virtual or twisted) link diagram $D$ in $\mathbb{R}^2$,  as in Section~\ref{sect:intro} we let 
 $s(D)$ denote a diagram obtained from $D$ by a reflection along a line  in  $\mathbb{R}^2$ and switching over/under information on all classical crossings.   If $D$ and $D'$ are equivalent as (classical, virtual or twisted) link diagrams, so are $s(D)$ and $s(D')$.  Thus, for a (classical, virtual or twisted) link $L$, we have that $s(L)$ is well defined as a (classical, virtual or twisted) link.  Note  that while a classical link $L$ and its counterpart $s(L)$ are equivalent as classical links, a virtual link $L$ and  its counterpart $s(L)$ may not be equivalent as virtual links.

We prove the following proposition in Section~\ref{sect:proof}.

\begin{prop}\label{prop1}
For any twisted link $L$, we have that $L$ and $s(L)$ are equivalent as twisted links. \end{prop}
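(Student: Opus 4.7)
The plan is to use the geometric description of twisted links recalled in the introduction: a twisted link $L$ corresponds, up to stable equivalence, to a link $\ell$ in the oriented thickening $M$ of a closed (possibly non-orientable) surface $F$. For a diagram $D$ of $L$, the operation $s$---reflection of the diagram across a line, combined with a switch of every classical crossing---is realized geometrically by the self-homeomorphism $\sigma$ of $M$ obtained by composing a reflection of $F$ with the $I$-fiber flip. Since each factor reverses the orientation of $M$, the composite $\sigma$ is orientation-preserving, so $\sigma(\ell)$ indeed represents a twisted link, namely $s(L)$. The task is therefore to show that $\ell$ and $\sigma(\ell)$ represent the same stable equivalence class.

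The key step is to reduce to the case that $F$ is non-orientable. If $D$ already contains at least one bar this is automatic; otherwise, one first applies the T-move that introduces a pair of adjacent bars on a chosen strand (leaving $L$ unchanged), whose effect on $F$ is to attach two crosscaps, rendering it non-orientable. Once $F$ is non-orientable, it contains an orientation-reversing simple closed curve $\gamma$, which (after further stabilization if needed) may be chosen disjoint from the projection of $\ell$. Transporting a regular neighborhood of $\ell$ once around $\gamma$ in $M$ defines an ambient isotopy whose monodromy on the $I$-fiber over $\gamma$ is a flip; the time-one map of this isotopy therefore coincides with $\sigma$ on a neighborhood of $\ell$. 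Consequently $\sigma(\ell)$ is ambient isotopic to $\ell$ in $M$, yielding $L = s(L)$ as twisted links.

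The main obstacle is making this last isotopy argument rigorous: one must verify that $\gamma$ can really be arranged disjoint from the projection of $\ell$ and that the combined planar transport around $\gamma$ with the synchronized fiber flip assembles into a bona fide ambient isotopy of $M$, not merely a homeomorphism of $M$ to itself. A more elementary (but combinatorially heavier) alternative is a purely diagrammatic proof: introduce bars flanking each classical crossing of $D$ using the ``two bars cancel'' T-move in reverse, apply T3 to flip each crossing at the cost of some additional virtual crossings, and finally rearrange using T2 and the virtual Reidemeister moves until the global diagram matches $s(D)$. Either route produces a sequence of extended Reidemeister moves from $D$ to $s(D)$ and establishes the proposition.
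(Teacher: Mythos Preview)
Your geometric approach is quite different from the paper's, which is entirely diagrammatic: the paper places $D$ in Morse position, slices it into elementary pieces, and shows by explicit detour moves and applications of T2/T3 how each reflected piece can be reached, finally reassembling $s(D)$. No thickened-surface model is invoked.

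Your thickening argument, however, has a genuine gap in the monodromy step. Transporting a \emph{small disc} once around an orientation-reversing loop $\gamma$ indeed returns it reflected with the $I$--fibre flipped, but the projection of $\ell$ is not contained in a disc unless the link is classical. You cannot rigidly carry an essential subsurface of $F$ around $\gamma$; if you push strands through one at a time, each acquires its own local reflection, which does not assemble into the single global reflection that $\sigma$ demands. So the assertion that ``the time-one map of this isotopy coincides with $\sigma$ on a neighbourhood of $\ell$'' is not justified, and in fact fails in the form stated. (A cleaner route along these lines is to stop at your first observation: $\sigma$ is an orientation-preserving homeomorphism of oriented thickenings, and Bourgoin's stable equivalence already identifies links related by such maps. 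That does give Proposition~\ref{prop1}, and is essentially the ``alternative proof'' the paper alludes to in Remark~\ref{rmk-rbor}; but it relies on the thickened-surface correspondence rather than being self-contained.)

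Your diagrammatic alternative is the right idea and is close in spirit to what the paper does, but the phrase ``finally rearrange \ldots\ until the global diagram matches $s(D)$'' hides all of the work. After you flank each crossing with bars and apply T3, the resulting diagram differs from $s(D)$ by a global rearrangement that must simultaneously reflect every strand; showing this rearrangement is achievable by virtual moves and T2 is exactly what the paper's chamber-by-chamber construction accomplishes, and it is not obvious without that scaffolding.
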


\begin{cor}\label{cor:vt}
For any virtual link $L$, we have that $L$ and $s(L)$ are equivalent  as twisted links. Thus, $f(L) = f(s(L))$.
\end{cor}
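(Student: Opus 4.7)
The plan is essentially to derive this corollary as an immediate consequence of Proposition~\ref{prop1}. Since a virtual link diagram is, by definition, a twisted link diagram (a twisted diagram with no bars), every virtual link $L$ determines a twisted link $\iota(L)$, namely the equivalence class under extended Reidemeister moves of any representative diagram $D$ of $L$.

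First I would observe that the reflection-and-crossing-switch operation $s$ is defined diagrammatically in the same way for virtual and twisted diagrams: $s(D) = c \circ r(D)$, where $r$ is a reflection along a line in $\mathbb{R}^2$ and $c$ switches all over/under information at the classical crossings; bars (if any) are unaffected by $c$ and simply get reflected by $r$. In particular, when $D$ is a virtual link diagram (so has no bars), the twisted diagram $s(D)$ obtained by treating $D$ as a twisted diagram coincides with the virtual diagram $s(D)$ used in the definition of $s(L)$ as a virtual link. Hence $\iota(s(L)) = s(\iota(L))$, i.e., the $s$ operation is compatible with the inclusion $\iota$.

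Now I apply Proposition~\ref{prop1} to the twisted link $\iota(L)$: it asserts that $\iota(L)$ and $s(\iota(L))$ are equivalent as twisted links. Combining this with the compatibility $\iota(s(L)) = s(\iota(L))$ established above, we conclude that $\iota(L)$ and $\iota(s(L))$ represent the same twisted link, which is exactly the statement that $L$ and $s(L)$ are equivalent as twisted links. Translating to the map $f$ defined in the introduction, this is precisely $f(L) = f(s(L))$.

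There is essentially no obstacle here; the only thing that needs to be checked is the compatibility of the $s$ operation with the inclusion $\iota$, and this is immediate from the fact that the definition of $s$ is given diagrammatically in terms of data (reflection and crossing change) that make equal sense in the virtual and twisted settings. All substantive work has been pushed into Proposition~\ref{prop1}.
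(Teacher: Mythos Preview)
Your proposal is correct and matches the paper's approach: the corollary is an immediate consequence of Proposition~\ref{prop1}, since virtual link diagrams are twisted link diagrams and the operation $s$ is defined identically in both settings. The paper does not even spell out a separate proof for this corollary, so your careful verification of the compatibility $\iota(s(L)) = s(\iota(L))$ is, if anything, more explicit than what the paper records.
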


\begin{defn}
Two virtual links $L$ and $L'$ are {\it $s$-congruent} if there are maximal split decompositions
$L = L_1 \sqcup \dots \sqcup L_n$ and $L' = L'_1 \sqcup \dots \sqcup L'_n$ such that for each $i=1, \dots, n$,
$L'_i$ is equivalent to $L_i$ or $s(L_i)$ as a virtual link.
\end{defn}

The following is our main theorem.

\begin{thm}\label{thm3}
Let $L$ and $L'$ be virtual links.  Then $L$ and $L'$ are equivalent as twisted links if and only if they are $s$-congruent.
\end{thm}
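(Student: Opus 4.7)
The $(\Leftarrow)$ direction is immediate from Corollary~\ref{cor:vt}: given $s$-congruent decompositions $L = L_1 \sqcup \cdots \sqcup L_n$ and $L' = L'_1 \sqcup \cdots \sqcup L'_n$ with each $L'_i$ equivalent to $L_i$ or $s(L_i)$ as a virtual link, Corollary~\ref{cor:vt} makes $L'_i$ equivalent to $L_i$ as a twisted link in either case, and the split union transports this to an equivalence $L' \sim L$ as twisted links.

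For the converse, the plan is to first reduce to the non-splittable case. The key sub-claim is that a virtual link is non-splittable as a virtual link if and only if it is non-splittable as a twisted link; this should follow from the geometric interpretation via abstract link diagrams, since a splitting is detected by a separating simple closed curve on the ambient surface disjoint from the link, and such a curve is visible equally well whether we work with orientable or (possibly) non-orientable surfaces. Granting this, the maximal virtual decompositions $L = L_1 \sqcup \cdots \sqcup L_n$ and $L' = L'_1 \sqcup \cdots \sqcup L'_m$ are also maximal twisted decompositions, so Lemma~\ref{uniquemiximamdecomp} applied in the twisted category to $L \sim L'$ yields $n = m$ and, after reindexing, $L_i$ is twisted-equivalent to $L'_i$ for each $i$. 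It therefore suffices to treat the non-splittable case: if $K$ and $K'$ are non-splittable virtual links equivalent as twisted links, then $K'$ is equivalent to $K$ or to $s(K)$ as a virtual link.

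For the non-splittable case, I would pass to the geometric model. Represent $K$ and $K'$ by their unique irreducible pairs $(\Sigma, \ell)$ and $(\Sigma', \ell')$ of a connected oriented closed surface with a link in its oriented thickening, as supplied by Kuperberg's theorem. Since $K \sim K'$ as twisted links, the two pairs coincide up to stable equivalence in the category of links in oriented thickenings of (possibly non-orientable) closed surfaces, and by the twisted analog of Kuperberg's uniqueness theorem they share a common irreducible twisted representative. Assuming $(\Sigma,\ell)$ already achieves this minimum (so that no non-orientable reduction is possible under the non-splittability hypothesis), one obtains a homeomorphism $(\Sigma, \ell) \cong (\Sigma', \ell')$ as pairs of an unoriented surface with a link. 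The surface $\Sigma$ admits exactly two orientations: one recovers $K$ as a virtual link, the other recovers $s(K)$, so $K'$ is equivalent to $K$ or to $s(K)$ as a virtual link. The main obstacles lie in two verifications: that the irreducible twisted representative of a non-splittable virtual link is necessarily orientable (a minimal-surface argument ruling out a non-orientable reduction of the oriented $\Sigma$), and that reversing the orientation of $\Sigma \times I$ corresponds at the diagrammatic level precisely to the $s$ operation of reflecting $\mathbb{R}^2$ and exchanging over/under at each classical crossing.
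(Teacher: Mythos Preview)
Your sufficiency argument matches the paper's exactly.

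For necessity, you take a genuinely different route from the paper --- in fact, precisely the alternative approach the paper flags in Remark~\ref{rmk-rbor} as possible but deliberately avoided. The paper's argument is purely diagrammatic: it invokes the double covering construction of \cite{rkk7} (Theorem~\ref{thm5}), which for a virtual link $L$ yields $\widetilde{L} = L \sqcup s(L)$. If $L$ and $L'$ are twisted-equivalent, then $L \sqcup s(L)$ and $L' \sqcup s(L')$ are equivalent \emph{as virtual links}, and uniqueness of maximal split decompositions (Lemma~\ref{uniquemiximamdecomp}) applied in the virtual category finishes the job immediately. No passage to thickened surfaces, no Kuperberg, no irreducibility analysis.

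Your approach via irreducible representatives and Bourgoin's uniqueness theorem is morally sound, but as written it is a plan with real gaps rather than a proof. You explicitly flag two of them (orientability of the irreducible twisted representative of a non-splittable virtual link, and that orientation reversal of $\Sigma$ corresponds diagrammatically to $s$), and there is a third you gloss over: the equivalence of virtual and twisted non-splittability. Your justification via ``a separating simple closed curve on the ambient surface'' concerns a single representative, not the stable equivalence class, so it does not settle the question. All three points can likely be established, but none is a one-liner, and together they make your argument substantially longer and more dependent on external machinery than the paper's. The double-covering trick buys a short, self-contained proof that stays entirely within virtual link theory and sidesteps the irreducible-representative apparatus altogether.
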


Theorem~\ref{thm1} is a special case of Theorem~\ref{thm3}.
Theorem~\ref{thm2} follows from  Theorem~\ref{thm3}, since classical links $L$ and $L'$ are $s$-congruent if and only if they are equivalent as classical links.

\begin{rmk}\label{rmk-rbor}
In \cite[p.1251]{rbor} it is stated that virtual link theory injects into the theory of links in oriented thickenings. It should be understood that virtual link theory {\it modulo $s$-congruence} injects into twisted link theory.
There is an alternative proof of Theorem~\ref{thm3} using a uniqueness theorem (\cite[Theorem~1]{rbor}) of irreducible representatives of links in oriented thickenings of closed surfaces.
Our proof given in Section~\ref{sect:proof} is a direct argument using diagrams.
\end{rmk}

\section{Proofs}
\label{sect:proof}

\begin{lem}\label{uniquemiximamdecomp}
A  maximal  split decomposition is unique up to reordering. That is, if $L = L_1 \sqcup \dots \sqcup L_n$
and $L' = L'_1 \sqcup \dots \sqcup L'_{n'}$ are  maximal  split decompositions of equivalent (classical, virtual or twisted) links $L$ and $L'$,  then $n=n'$ and there exists a permutation $\sigma$ of $\{1, \dots, n\}$ such that for each  $i=1,\dots, n$, $L_i$ is equivalent to $L'_{\sigma(i)}$ as a (classical, virtual or twisted) link.
\end{lem}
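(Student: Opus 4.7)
The plan is to deduce the uniqueness of the maximal split decomposition from the uniqueness of the \emph{irreducible} topological realization of a (classical, virtual, or twisted) link. As recalled in the introduction, each such link $L$ corresponds to a stable equivalence class of links in thickened closed surfaces, and this class possesses a canonical irreducible representative $\tilde{L}\subset F\times[0,1]$ (where in the twisted case one allows non-orientable $F$ together with an oriented $I$-bundle): uniqueness holds by standard $3$-manifold theory in the classical case, by Kuperberg's theorem \cite{rKup} in the virtual case, and by Bourgoin's theorem \cite[Theorem~1]{rbor} in the twisted case.

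Next, I would show that any diagrammatic split decomposition $D=D_1\sqcup\cdots\sqcup D_n$ of a diagram of $L$ translates into a disjoint-union decomposition of the associated abstract link diagram (compare Figure~\ref{fig:figabst}), hence into a topological decomposition $F=F_1\sqcup\cdots\sqcup F_n$ of the supporting surface with $\tilde{L}_i\subset F_i\times[0,1]$ representing $L_i$. The key point is that the local correspondence between a diagram and its abstract counterpart is support-preserving: disjoint portions of the diagram sit in disjoint portions of the surface, and both the stabilization/destabilization moves used to reach an irreducible representative and the extended Reidemeister moves can be localized within a single $F_i$. Consequently, the irreducible representative of $L$ itself splits as $(F_1',\tilde{L}_1')\sqcup\cdots\sqcup(F_n',\tilde{L}_n')$, where $(F_i',\tilde{L}_i')$ is the irreducible representative of $L_i$.

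Finally, given two maximal split decompositions $L=L_1\sqcup\cdots\sqcup L_n$ and $L=M_1\sqcup\cdots\sqcup M_m$, the previous step shows that the irreducible representative of $L$ decomposes as a disjoint union of the irreducible representatives of the $L_i$, and also of the $M_j$. Each $L_i$ and each $M_j$ is non-splittable, so its irreducible representative has connected ambient surface, for otherwise a further split decomposition of $L_i$ or $M_j$ would exist, contradicting maximality. Hence both decompositions coincide with the connected-component partition of $(F,\tilde{L})$, forcing $n=m$ and yielding the desired permutation $\sigma$. The main obstacle is the verification in step two that destabilizations reducing a representative to its irreducible form can always be chosen to respect a given split decomposition of the surface; this follows from the locality of those moves within embedded disks, but must be spelled out carefully to rule out destabilizations that would bridge distinct components $F_i$ and $F_j$.
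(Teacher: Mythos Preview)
Your approach is genuinely different from the paper's. The paper gives a short, purely diagrammatic argument: fix a sequence of extended Reidemeister moves realizing the equivalence $L\simeq L'$; this induces a bijection between the components of $L$ and those of $L'$, and restricts to an equivalence between any sublink of $L$ and the corresponding sublink of $L'$. Since $L_1$ is non-splittable, its corresponding sublink in $L'$ cannot meet two distinct $L'_j$ (otherwise $L_1$ would inherit a nontrivial split decomposition from the split position of the $L'_j$), so it lies inside a single $L'_{\sigma(1)}$; the same reasoning applied to $L'_{\sigma(1)}$ shows the correspondence is onto, and one continues inductively. No surface topology is invoked at all.

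Your route through irreducible surface representatives is conceptually appealing but heavier, and one step needs more care than you give it. The claim that the irreducible representative of $L$ is the disjoint union of the irreducible representatives of the $L_i$ requires that (i) the uniqueness theorems of \cite{rKup} and \cite{rbor} are available for representatives on possibly disconnected surfaces, and (ii) a disjoint union of irreducible pairs is itself irreducible. Point (ii) hinges on the precise definition of destabilization: for instance, two classical knots $K_1\sqcup K_2$ drawn on a single $S^2$ must count as \emph{reducible} (via surgery along a curve separating them) so that the genuinely irreducible representative lives on $S^2\sqcup S^2$; otherwise your connected-component argument collapses already in the classical case. This holds under the usual conventions, but it is exactly the sort of bookkeeping you flag as ``the main obstacle'' and do not actually carry out. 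The paper's elementary argument sidesteps all of this---consistent with Remark~\ref{rmk-rbor}, where the authors deliberately avoid invoking Bourgoin's uniqueness theorem even for the main result.
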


\begin{proof}
Fix an equivalence between $L$ and $L'$.
(An {\it equivalence} between $L$ and $L'$ is a sequence of diagrams $D=D^0, D^1, D^2, \dots, D^m =D'$ for some $m$ such that $D$ and $D'$ are diagrams of $L$ and $L'$,  respectively,  and where $D^{k+1}$, $k=0, 1, \dots, m-1$, is obtained from $D^k$ by a single extended Reidemeister move.  Fixing such an equivalence, we have a bijection between the components of $L$ and the components of $L'$, and we may consider, for any sublink of $L$, the corresponding sublink of $L'$.) Since $L_1$ is non-splittable, the corresponding sublink of  $L'$ is a sublink of $L'_{\sigma(1)}$ for some $\sigma(1) \in \{1, \dots, n'\}$. Since $L'_{\sigma(1)}$ is non-splittable, the corresponding sublink of $L$ is a sublink of $L_1$.  Thus $L_1$ and $L'_{\sigma(1)}$ are equivalent to each other via the equivalence between $L$ and $L'$. Continuing by the same reasoning, we see that
$n=n'$ and there is a permutation $\sigma$ of $\{1, \dots, n\}$ such that $L_i$ and $L'_{\sigma(i)}$ are equivalent for $i=2, \dots, n$.
\end{proof}

Let $D$ be a (classical, virtual or twisted) link diagram.   A {\it virtual path} on $D$ is an arc $\alpha$ on $D$ from a regular point to another such that it is disjoint from classical crossings and bars of $D$. 
Here a {\it  regular point} of $D$ means a point of $D$ away from classical crossings, virtual crossings and bars. 
A virtual path may pass through virtual crossings.  An arc on $D$ away from classical crossings, virtual crossings and bars is a virtual path by definition. 

Let $D$ and $D'$ be (classical, virtual or twisted) link diagrams.  If there exist virtual paths $\alpha$ and $\alpha'$ of $D$ and $D'$ respectively such that $D \setminus \alpha = D' \setminus \alpha'$,   then we say that $D'$ is obtained from $D$ by a {\it detour move} replacing $\alpha$ with $\alpha'$.  

\begin{lem}[The Detour Lemma, cf. \cite{rbor, rkk1, rkauD}]\label{detourlemma}
Let $D$ and $D'$ be virtual (or twisted, respectively) link diagrams. 
Suppose that $D'$ is obtained from $D$ by a detour move replacing a virtual path $\alpha$ with a virtual path $\alpha'$.  
Then $D$ is transformed into $D'$ by using the moves {\rm V1}, \dots, {\rm V4} (or {\rm V1}, \dots, {\rm V4} and {\rm T1}, respectively) up to isotopy of $\mathbb{R}^2$.  
In particular, $D$ and $D'$ are equivalent as virtual (or twisted, respectively) link diagrams. 
\end{lem}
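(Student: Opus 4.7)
The plan is a general-position argument following the classical Reidemeister-theorem paradigm. Set $D_0 := D \setminus \alpha = D' \setminus \alpha'$, and let $p, q$ denote the common endpoints of $\alpha$ and $\alpha'$, which are regular points of $D_0$. Since $\mathbb{R}^2$ is simply connected, a smooth homotopy $H : [0,1] \times [0,1] \to \mathbb{R}^2$ exists with $H(0,\cdot) = \alpha$, $H(1,\cdot) = \alpha'$, $H(t,0) = p$ and $H(t,1) = q$ for every $t$. Perturb $H$ so that it is in general position with respect to $D_0$, viewed as a union of smooth strands together with its finite set of distinguished points (classical crossings, virtual crossings, and, in the twisted case, bars).

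Outside a finite set of exceptional times $0 < t_1 < \cdots < t_k < 1$, the trace $H(t,\cdot)$ is an immersion meeting the strands of $D_0$ transversally at regular points, and declaring each such intersection to be a virtual crossing yields a diagram $D(t) := H(t,\cdot) \cup D_0$ whose combinatorial type is constant on each interval $(t_{i-1}, t_i)$. At each $t_i$, a single codimension-one singularity appears, and a direct case analysis identifies it with one move from Figure~\ref{fgrmoves}: a self-tangency of the path gives V1; a tangency of the path with a strand of $D_0$ gives V2; a transverse triple point among the path and the strands of $D_0$, or passage of the path through a pre-existing virtual crossing of $D_0$, gives V3; passage of the path through a classical crossing of $D_0$ gives V4; and in the twisted case, passage of a path-strand intersection point through a bar of $D_0$ gives T1. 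Between consecutive $t_i$ the diagrams differ only by planar isotopy.

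Concatenating the local moves with the intervening isotopies then yields a finite sequence of V1--V4 moves (together with T1 in the twisted case), up to isotopy of $\mathbb{R}^2$, carrying $D = D(0)$ to $D' = D(1)$, as required.

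The main obstacle is the general-position analysis itself: one must verify that the five types of events above exhaust the codimension-one strata of the space of homotopies, and that each event is realised by a single move rather than by a non-trivial composition. A useful simplification compared to the classical Reidemeister theorem is that every newly created path-strand intersection is declared virtual, so the over/under information carried by $D_0$ is untouched throughout the detour, and the classical moves R1--R3 play no role. The twisted case contributes only the bar-passage event, which by inspection is precisely the T1 move.
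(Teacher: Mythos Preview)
Your approach is exactly the paper's: a general-position homotopy argument in which each codimension-one event is matched to one of the listed moves. Two points need correction.

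First, the event ``self-tangency of the path gives V1'' is mislabeled. A self-tangency creates or cancels a \emph{pair} of virtual self-crossings and is therefore a V2 event; the V1 move (birth or death of a single virtual kink) corresponds instead to a moment at which $H(t,\cdot)$ fails to be an immersion (a cusp). Your setup already permits non-immersed exceptional times, so cusps should be listed separately as the source of V1, and self-tangencies folded into the V2 case.

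Second, and this is the one genuine gap, passage of the path through a classical crossing does \emph{not} always yield V4 as drawn in Figure~\ref{fgrmoves}. Depending on the over/under data of that classical crossing, the local picture is either V4 or its variant V4$'$ obtained by switching the crossing. The paper deals with this explicitly by observing that V4$'$ decomposes as V2, V4, V2; hence the move list in the statement remains V1--V4, but only after this reduction. Your sentence ``each event is realised by a single move rather than by a non-trivial composition'' is therefore false in this one case, and the reduction of V4$'$ should be inserted to complete the argument.
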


\begin{proof} 
First consider a case that $D$ and $D'$ are virtual link diagrams.  There is a homotopy of arcs in $\mathbb{R}^2$, say $\alpha_t$ $(t \in [0,1])$,  with $\alpha_0 = \alpha$ and  $\alpha_1= \alpha'$ keeping the endpoints fixed.  Changing the homotopy slightly, we may assume that $\alpha_t$ are immersed arcs for all but a finite number of exceptional $t$'s.  For each regular value $t$, 
$D_t := (D \setminus \alpha ) \cup \alpha_t$ is regarded as a virtual link diagram 
by taking all crossings involving $\alpha_t$ to be virtual crossings. 
When $t$ changes continuously away from the exceptional values, the diagram $D_t$ changes by an isotopy of $\mathbb{R}^2$.  
 Before and after each exceptional value $t$, $D_t$ changes as in {\rm V1}, \dots, {\rm V4} or {\rm V4$'$}, where {\rm V4$'$} is the move obtained from V4 by switching over/under information of the crossing. 
(The arc in the move V1 should be a part of $\alpha_t$.  For V2, both or one of the two arcs are a part of $\alpha_t$.  For V3, all three  arcs, two of them or one of them are  a part of $\alpha_t$.  For V4 or V4$'$, the arc passing through the two virtual crossings is a part of $\alpha_t$.  {\rm V4} and {\rm V4$'$} occur only when the homotopy $\alpha_t$ passes over a classical crossing of $D$.) 
It is easily verified that {\rm V4$'$} is realized as a combination  of the moves {\rm V2}, {\rm V4} and {\rm V2}.  Thus we see that $D$ is transformed into $D'$ by using  {\rm V1}, \dots, {\rm V4}.  
In the case that $D$ and $D'$ are twisted link diagrams, it is proved by a similar argument.  When the homotopy $\alpha_t$ passes over a bar, we need the move T1.  Thus we see that $D$ is transformed into $D'$ by using the moves {\rm V1}, \dots,  {\rm V4} and {\rm T1}.  \end{proof}

The above lemma can be applied to a family of virtual paths.  

 A  {\it virtual path family} on a (classical, virtual or twisted) link diagram $D$ is a family of virtual paths on $D$ each of whose intersections, if it exists, is a virtual crossing.   
 
Let $D$ and $D'$ be (classical, virtual or twisted) link diagrams.  
Let $\alpha_1, \dots, \alpha_m$ be a virtual path family on $D$ and 
let $\alpha'_1, \dots, \alpha'_m$ be a virtual path family  on $D'$.  
We say that $D'$ is obtain from $D$ by {\it detour moves} by replacing  $\alpha_1, \dots, \alpha_m$ with $\alpha'_1, \dots, \alpha'_m$ if $ D \setminus (\cup_{k=1}^m \alpha_k)  = D' \setminus (\cup_{k=1}^m \alpha'_k) $ 
and for each $k =1, \dots, m$, $\partial \alpha_k = \partial \alpha'_k$.  

\begin{lem}[A Generalized Version of the Detour Lemma]\label{generaldetourlemma}
Let $D$ and $D'$ be virtual (or twisted, respectively) link diagrams. 
Suppose that $D'$ is obtained from $D$ by detour moves by replacing  $\alpha_1, \dots, \alpha_m$ with $\alpha'_1, \dots, \alpha'_m$.   
Then $D$ is transformed into $D'$ by using the moves {\rm V1}, \dots, {\rm V4} (or {\rm V1}, \dots {\rm V4} and {\rm T1}, respectively) up to isotopy of $\mathbb{R}^2$.  
In particular, $D$ and $D'$ are equivalent as virtual (or twisted, respectively) link diagrams. 
\end{lem}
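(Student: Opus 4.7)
The plan is a straightforward induction on $m$, the number of paths in the virtual path family. The base case $m=1$ is exactly Lemma~\ref{detourlemma}.

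For the inductive step, I would first apply Lemma~\ref{detourlemma} to the single virtual path $\alpha_m$ in $D$, producing an intermediate diagram $D'' := (D \setminus \alpha_m) \cup \alpha'_m$ via a sequence of moves V1--V4 (and T1 in the twisted case). The key claim is then that $\alpha_1, \dots, \alpha_{m-1}$ still constitute a virtual path family on $D''$, with matching endpoints of the target family $\alpha'_1, \dots, \alpha'_{m-1}$, and that the analogue of the equality $D'' \setminus \bigcup_{k<m}\alpha_k = D' \setminus \bigcup_{k<m}\alpha'_k$ holds. Once this is verified, the inductive hypothesis produces a sequence of moves V1--V4 (and T1) transforming $D''$ into $D'$, and concatenation completes the proof.

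The main verification, and the principal obstacle, is to show that the virtual path family condition is preserved under the detour of $\alpha_m$. Concretely I would check: (i) each path $\alpha_k$ for $k<m$ still avoids classical crossings and bars in $D''$, which is immediate since none of these features are created or destroyed by a detour move; (ii) the pairwise intersections among $\alpha_1, \dots, \alpha_{m-1}$ remain virtual crossings, inherited from $D$; and (iii) new intersections between $\alpha_k$ ($k<m$) and the inserted arc $\alpha'_m$ are virtual crossings, which holds by the very definition of a detour move (after a small transversality perturbation of $\alpha'_m$, absorbed into additional V1--V4 moves, so that endpoints of different paths remain pairwise disjoint and all crossings are transverse double points). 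The endpoint matching $\partial \alpha_k = \partial \alpha'_k$ is preserved tautologically, and the set-theoretic identity for $D''$ follows from the hypothesis $D \setminus \bigcup_k \alpha_k = D' \setminus \bigcup_k \alpha'_k$ together with $D'' = (D \setminus \alpha_m) \cup \alpha'_m$.

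Alternatively, one could avoid induction by taking simultaneous homotopies $\alpha_{k,t}$ from $\alpha_k$ to $\alpha'_k$ in $\mathbb{R}^2$ (keeping endpoints fixed) that are generic in the sense that only one Reidemeister-type event happens at a time, and reading off the sequence of moves V1--V4 (and T1) directly, as in the proof of Lemma~\ref{detourlemma}. This is conceptually identical to the inductive argument, but the inductive formulation seems cleaner because it lets us reuse Lemma~\ref{detourlemma} as a black box rather than reprove its homotopy analysis in the presence of multiple paths.
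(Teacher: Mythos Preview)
Your proposal is correct and essentially the same as the paper's own proof. The paper also proceeds by reducing to Lemma~\ref{detourlemma} one arc at a time: after a preliminary perturbation of the $\alpha_k$'s to make them transverse to the $\alpha'_j$'s (your point (iii)), it defines $D^k := (D \setminus \cup_{i\leq k}\alpha_i) \cup (\cup_{i\leq k}\alpha'_i)$ and observes that $D^{k-1} \to D^k$ is a single detour move, which is exactly your inductive step unrolled (with the arcs processed in the opposite order).
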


\begin{proof} 
If necessary, modifying $\alpha_1, \dots, \alpha_m$ slightly, we may assume that 
for each $k =1, \dots, m$, $\alpha_k \cap \alpha'_j$ with any $j \neq k$ is empty or consists of some transverse double points.   
For each $k =1, \dots, m$,  let $D^k: = (D \setminus \cup_{i=1}^k \alpha_i) \cup (\cup_{i=1}^k \alpha'_i)$, which is regarded as a virtual (or twisted) link diagram by taking all crossings of $D^k$ involving $\cup_{i=1}^k \alpha'_i$ to be virtual crossings. 
Put $D^0= D$, and note that $D^m = D'$. 
Then $D^{k}$ is obtained from $D^{k-1}$ by a detour move replacing $\alpha_k$ with $\alpha'_k$.  
Applying Lemma~\ref{detourlemma} inductively, we obtain the result.    
\end{proof}

Let T3$'$ be the move obtained from the move T3 (Figure~\ref{fgrmoves}) 
by switching over/under information of the classical crossing.   

\begin{lem}\label{t3lemma}
The move {\rm T3$'$} is realized by the moves {\rm V1}, \dots, {\rm V4} and {\rm T3}.   
\end{lem}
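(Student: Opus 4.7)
The plan is to derive T3$'$ from V1--V4 and T3 via a diagrammatic argument based on the Generalized Detour Lemma (Lemma~\ref{generaldetourlemma}). The strategy is to start with the LHS of T3$'$, perform a judicious sequence of detour moves to bring the diagram into a configuration in which T3 is applicable, apply T3, and then undo the detour by further V1--V4 moves to arrive at the RHS of T3$'$.

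Concretely, I would proceed as follows. Pick one of the four arcs incident to the classical crossing in the LHS of T3$'$ and, using the Generalized Detour Lemma, reroute this arc through a newly introduced region of $\mathbb{R}^2$, picking up only virtual crossings with the rest of the diagram. This reroute leaves both the classical crossing and all bars untouched, and in particular preserves the over/under data of the crossing. However, it rearranges the spatial placement of the arcs around the crossing so that the local picture becomes, up to planar isotopy, the LHS of an instance of T3 (after which T3 can be applied). I then apply T3 to rearrange the bars across the crossing, and finally apply the Detour Lemma a second time to undo the original rerouting, arriving at the RHS of T3$'$.

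The main obstacle is choosing the correct detour. Since neither V1--V4 nor detour moves can change the over/under of a classical crossing, the burden of accommodating the reversed crossing data of T3$'$ (relative to T3) is absorbed into the combination of \emph{detour}$+$\emph{T3}$+$\emph{detour}. The key observation making this possible is that the bar-rearrangement effected by T3 has a symmetry between the two strands at the classical crossing: by rerouting one of the strands through virtual crossings so that it enters the crossing from the ``other side,'' the local picture matches an instance of T3 whose crossing over/under is fixed, yet which yields, after undoing the detour, the picture required by T3$'$. I expect the remainder of the proof to consist of an explicit drawing, move by move, that verifies each intermediate configuration and confirms that the endpoints are indeed the LHS and RHS of T3$'$.
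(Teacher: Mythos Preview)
Your overall strategy---one application of T3 together with detour moves realized by V1--V4---is correct and is exactly what the paper does. The gap is in your proposed mechanism. You want to reroute one of the four arcs incident to the crossing so that afterwards ``the local picture becomes, up to planar isotopy, the LHS of an instance of T3.'' But a detour replaces only a virtual path: the classical crossing and all four bars stay put, and in particular the over/under at the crossing is unchanged. So after any such reroute the local picture still has the \emph{opposite} crossing from the one in T3 as literally drawn, and no preliminary detour repairs this. What actually makes T3 applicable is the (implicit) fact that a local move may be performed in any planar rotation, and rotating the T3 picture through $90^\circ$ interchanges the visual roles of the over- and under-strand: one side of T3$'$ already matches the T3 pattern in a rotated frame, with no preparation needed.

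The paper's proof runs accordingly. It applies T3 immediately to one side of T3$'$ (step $(1)\Rightarrow(2)$); the output is the other side of T3, but sitting in a rotated layout. It then performs an isotopy rotating a small disk about the classical crossing (step $(2)\Rightarrow(3)$), and finally applies detour moves to the four virtual paths obtained by deleting a regular neighbourhood of the crossing (step $(3)\Rightarrow(4)$); since these homotopies can be chosen to avoid all bars, only V1--V4 are required here, by Lemma~\ref{generaldetourlemma}. The result is the other side of T3$'$. So the shape of the argument is T3 $\to$ isotopy $\to$ detour rather than detour $\to$ T3 $\to$ detour; the substantive work is the clean-up \emph{after} T3, not the set-up before it.
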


\begin{proof} 
In Figure~\ref{fig:fgt}, (1) $\Rightarrow$ (2) is the  move T3, (2) $\Rightarrow$ (3) is an isotopic deformation rotating the classical crossing, and (3) $\Rightarrow $ (4) is a transformation by detour moves, where we consider $4$ virtual paths, say $\alpha_1, \dots, \alpha_4$ (or $\alpha'_1, \dots, \alpha'_4$), 
obtained from the local diagram depicted in (3) (or (4)) by removing a regular neighborhood of the classical crossing.  
Since we can consider homotopies of arcs changing $\alpha_1, \dots, \alpha_4$ to $\alpha'_1, \dots, \alpha'_4$  
without intersecting any bar, the transformation (3) $\Rightarrow $ (4) 
is realized by the moves V1, \dots, V4.  
Thus, (1) $\Rightarrow$ (4), which is the move T3$'$, is realized by the moves V1, \dots, V4 and T3. 
\end{proof}

\begin{figure}[h]
\centerline{
\includegraphics[width=7.2cm]{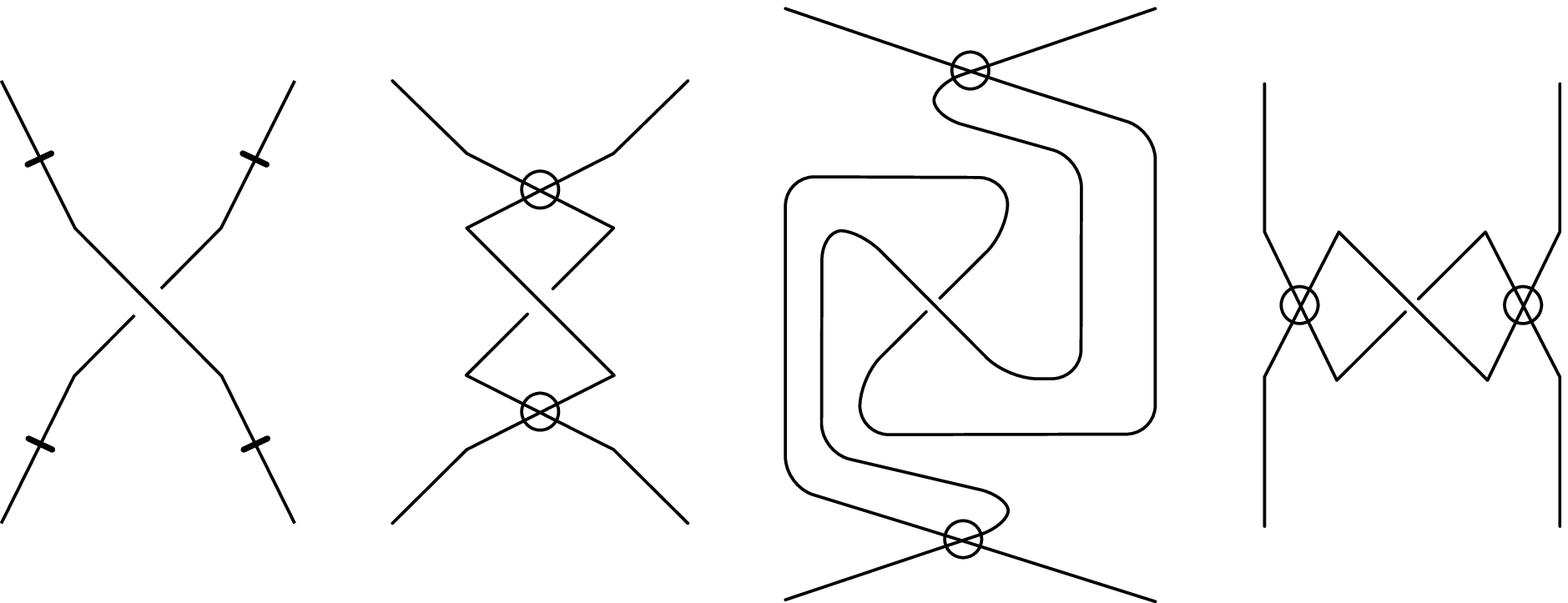}
}
\setlength{\unitlength}{1mm}
\begin{picture}(5,0)(0,0)
\put(-30,0){(1)}
\put(-10,0){(2)}
\put(10,0){(3)}
\put(30,0){(4)}
\end{picture}
\vspace{-0.2cm}
\caption{}\label{fig:fgt}
\end{figure}

Here we give a  proof of Proposition~\ref{prop1}. 

\begin{proof}[Proof of Proposition~\ref{prop1}]
We show that for any twisted link diagram $D$, the diagrams $D$ and $s(D)$ are equivalent as twisted link diagrams.
By an ambient isotopy of $\R^2$, we may assume that $D$ lies in the half plane $\{x <0\}$ of the $xy$-plane,
and that it lies in general position with respect to the $y$-component.
By slicing along finitely many horizontal lines, $D$ has a decomposition into pieces of the types depicted in Figure~\ref{fig:fgpa}:
(i) there is a maximal point, (ii) there is a minimal point, (iii) and (iv) there is a classical crossing,
(v) there is a virtual crossing, (vi) there is a bar.
We call these pieces {\it standard pieces} and denote them
by $M_{a,b}$, $m_{a,b}$, $X^+_{a,b}$, $X^-_{a,b}$, $V_{a,b}$ and $T_{a,b}$, respectively, where $a$ (or  $b$, respectively)  is the number of vertical arcs appearing on the left (or right, respectively) of the event: a maximal point, a minimal point, a classical crossing, a virtual crossing or a bar.

\begin{figure}[h]
\centerline{
\includegraphics[width=9.0cm]{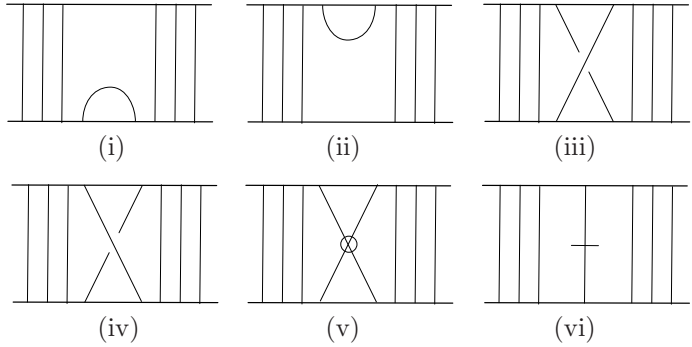}
}
\setlength{\unitlength}{1mm}
\begin{picture}(90,1)(-2,0)
\put(10,24){(i)}
\put(40,24){(ii)}
\put(70,24){(iii)}
\put(10,0){(iv)}
\put(40,0){(v)}
\put(70,0){(vi)}
\end{picture}
\caption{Standard pieces: $M_{a,b}$, $m_{a,b}$, $X^+_{a,b}$, $X^-_{a,b}$, $V_{a,b}$, and $T_{a,b}$}\label{fig:fgpa}
\end{figure}

\begin{figure}[h]
\centerline{
\includegraphics[width=3.5cm]{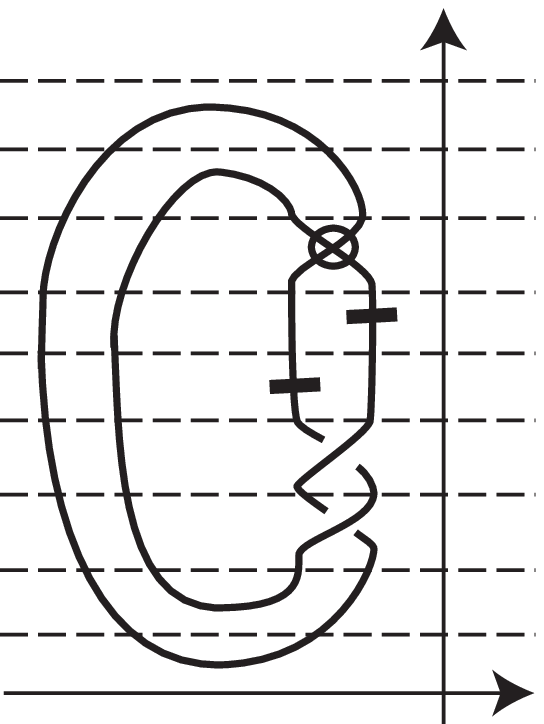}
}
\vspace{-0.2cm}
\caption{}\label{fig:fgh}
\end{figure}

For $k \in \Z$, we denote by $\ell_k$ the horizontal line determined by the equality $y=k$,
and denote by $C_k$ the region of $\R^2$ determined by the inequalities
$k-1 \leq y  \leq k$.  We call $C_k$ the $k$th {\it chamber}.

Let $m$ be the total number of maximal points, minimal points, classical crossings, virtual crossings and bars of $D$.
Modifying $D$ by an isotopy of $\R^2$, we may assume that $D$ lies in $\cup_{k=1}^{m} C_k$,  and for each $k =1, \dots, m, $ the restriction of $D$ to $C_k$ is a standard piece.
For example, for the diagram in Figure~\ref{fig:fgh}, $m=9$ and $D \cap C_k,$  $k=1, \dots, 9,$ is
$m_{0,0}$, $m_{1,1}$, $X^+_{2,0}$, $X^+_{2,0}$, $T_{2,1}$, $T_{3,0}$, $V_{2,0}$, $M_{1,1}$ or $M_{0,0}$, respectively.

Let $s(D)$ be the diagram obtained from $D$ by the reflection along the $y$-axis and switching over/under information on all classical crossings.
We show that $D$ is equivalent to $s(D)$ by a sequence of extended Reidemeister moves.

Let $\delta$ be a sufficiently small positive number and, for each $k=1, \dots, m-1$, let $N(\ell_k)$ be the regular neighborhood of $\ell_k$ determined by the inequalities  $k -\delta \leq y \leq k+\delta$.
We denote by $\ell^+_k$ (or $\ell^-_k$, respectively) the horizontal line determined by the equality $y=k+\delta$ (or $y=k-\delta$, respectively).

We may assume that the intersection $D \cap N(\ell_k)$ is a collection of $d_k$ $(\geq 0)$  vertical arcs, say  $A_{k,1}, \dots, A_{k,d_k}$.  Assume that $A_{k,1}, \dots, A_{k,d_k}$ appear in this order from left to right.
Let $P_{k,j}$, $j=1, \dots, d_k$, be the intersection point of
$A_{k,j}$ and $\ell_k$.
See Figure~\ref{fig:fgpba} (Left), where $d_k=4$, and $A_{k,j}$ and $P_{k,j}$  are denoted by $A_j$ and $P_j$, respectively.

Let $P'_{k,j}$,  $j=1, \dots, d_k$,  denote the image of $P_{k,j}$ under reflection along the $y$-axis.
By virtual Reidemeister moves, we deform $A_{k,1}, \dots, A_{k,d_k}$ into arcs
$\widetilde{A}_{k,1}, \dots, \widetilde{A}_{k,d_k}$ as in Figure~\ref{fig:fgpba} such that
$\widetilde{A}_{k,j} \cap \ell_k = P'_{k,j}$ and $\partial \widetilde{A}_{k,j} = \partial A_{k,j}$ for all $j=1, \dots, d_k$.  In Figure~\ref{fig:fgpba} (Right),  $\widetilde{A}_{k,j}$ and $P'_{k,j}$ are denoted by $\widetilde{A}_j$ and $P'_j$.

Let $D_1$ be the twisted link diagram obtained from $D$ by this modification for all $k =1, \dots, m-1$.

\begin{figure}[h]
\centerline{
\includegraphics[width=12.0cm]{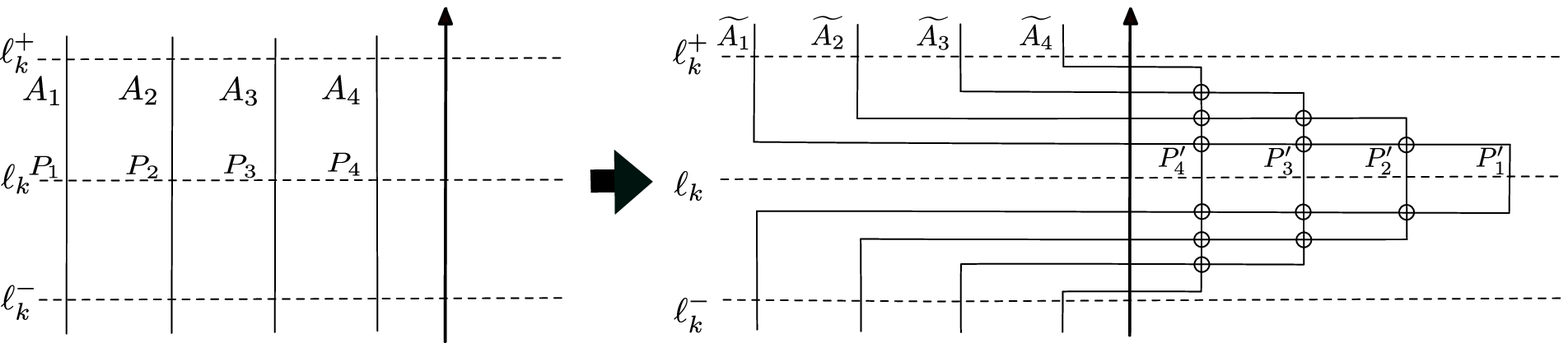}
}
\caption{}\label{fig:fgpba}
\end{figure}

We will further transform $D_1$ into a twisted link diagram $D_2$ by applying extended Reidemeister moves in each chamber $C_k$  for $k =1, \dots, m$ as below.  

For each chamber $C_k$ such that $D \cap C_k$ is of the form $M_{a,b}$, $m_{a,b}$, $V_{a,b}$ or $T_{a,b}$, we  transform $D_1 \cap C_k$ into $D_2 \cap C_k$ in the form of 
$M_{b,a}$, $m_{b,a}$, $V_{b,a}$ or $T_{b,a}$ respectively,
by extended Reidemeister moves in $C_k$.  We explain this procedure by using examples below. 

For example, consider a case that $D \cap C_k$ is of the form $M_{1,2}$. 
Figure~\ref{fig:fgpbae} shows $D \cap C_k$, $D_1 \cap C_k$ and $D_2 \cap C_k$.  
Let $\alpha_1, \dots, \alpha_4$ be the arcs of $D_1 \cap C_k$ and 
let $\alpha'_1, \dots, \alpha'_4$ be the arcs of $D_2 \cap C_k$ with $\partial \alpha_i = \partial \alpha'_i$ for $i=1, \dots, 4$, which are virtual paths. By Lemma~\ref{generaldetourlemma}, we see that $D_1 \cap C_k$ is transformed into $D_2 \cap C_k$ by extended Reidemeister moves. (In fact, we only need virtual Reidemeister moves.)  

\begin{figure}[h]
\centerline{
\includegraphics[width=12.0cm]{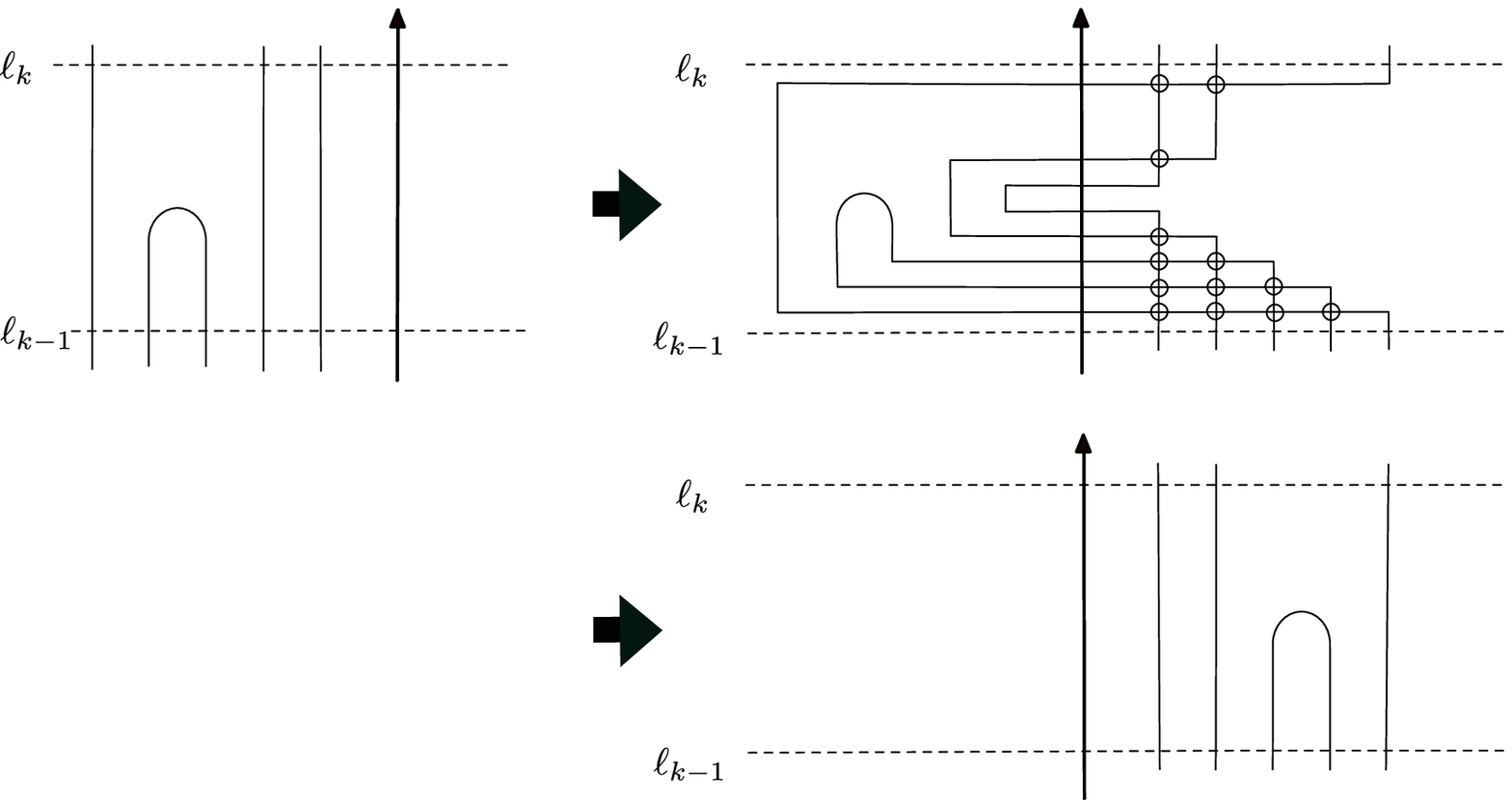}
}
\caption{}\label{fig:fgpbae}
\end{figure}

Consider a case that $D \cap C_k$ is of the form $V_{1,2}$. 
Figure~\ref{fig:fgpbac} shows  $D \cap C_k$, $D_1 \cap C_k$ and $D_2 \cap C_k$.  
Let $\alpha_1, \dots, \alpha_5$ be the arcs of $D_1 \cap C_k$ and 
let $\alpha'_1, \dots, \alpha'_5$ be the arcs of $D_2 \cap C_k$ with $\partial \alpha_i = \partial \alpha'_i$ for $i=1, \dots, 5$, which are virtual paths.   
By Lemma~\ref{generaldetourlemma}, we see that $D_1 \cap C_k$ is transformed into $D_2 \cap C_k$ by extended Reidemeister moves. (We only need virtual Reidemeister moves.)  

\begin{figure}[h]
\centerline{
\includegraphics[width=12.0cm]{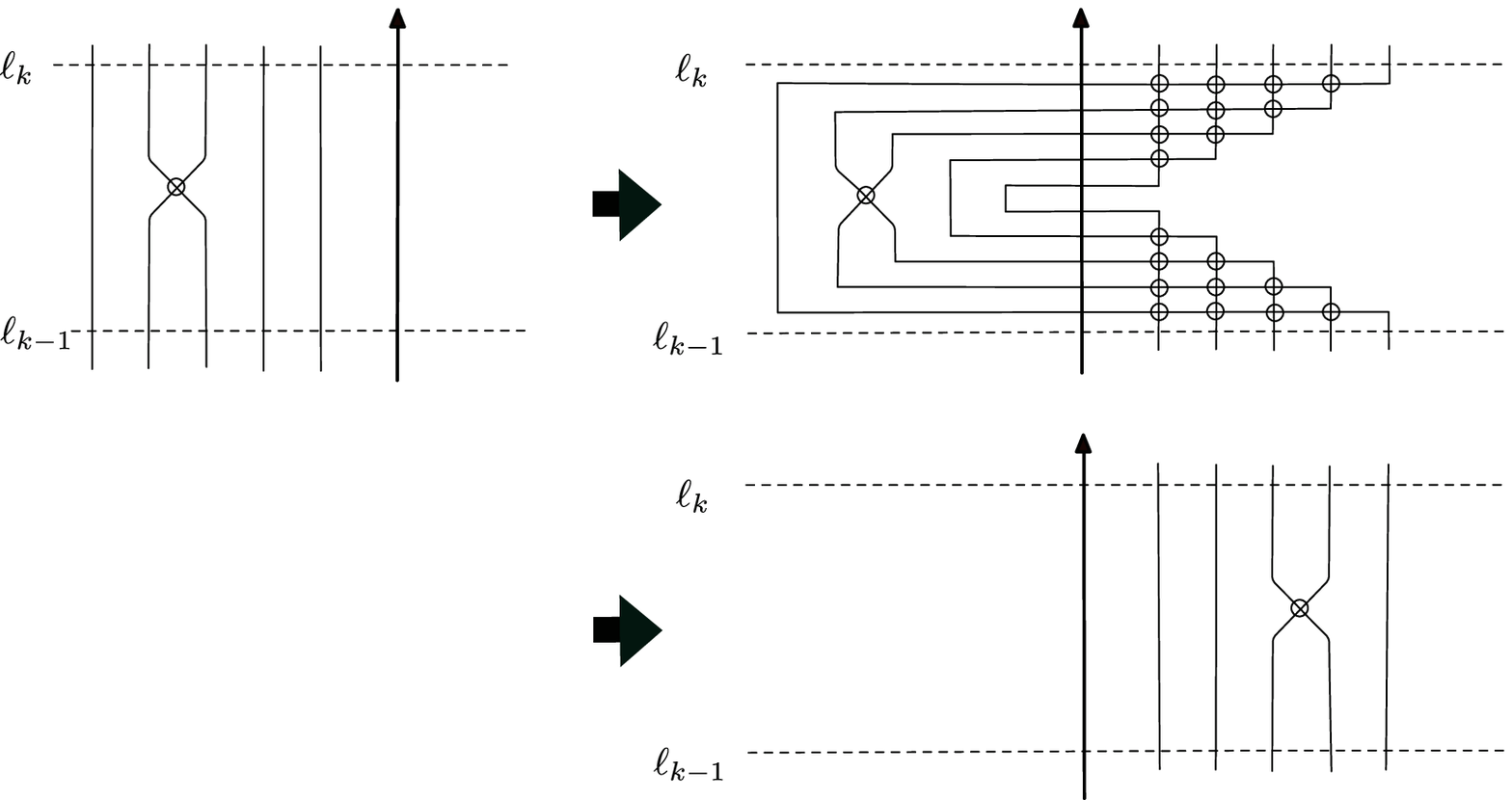}
}
\caption{}\label{fig:fgpbac}
\end{figure}

Consider a case that $D \cap C_k$ is of the form $T_{1,3}$. 
Figure~\ref{fig:fgpbad} shows  $D \cap C_k$, $D_1 \cap C_k$ and $D_2 \cap C_k$.   
Let $N$ be a regular neighborhood of the bar of $D_2 \cap C_k$ in the chamber $C_k$, and let ${\rm int}N$ denote the interior of $N$. The restriction of $D_2 \cap C_k$ to $N$ is a short vertical arc with the bar.   The restriction of $D_2 \cap C_k$ to $C_k \setminus {\rm int} N$ consists of $6$ arcs, which are virtual paths on $D_2$, say $\alpha'_1, \dots, \alpha'_6$. 
 Deform $D_1 \cap C_k$ by an isotopy of $C_k$ keeping $\partial C_k$  pointwise  fixed 
  and moving the bar of $D_1 \cap C_k$ into $N$ 
so that the new diagram, say $(D_1 \cap C_k)^\ast$, satisfies that $(D_1 \cap C_k)^\ast \cap N = D_2 \cap N$.  
The restriction of $(D_1 \cap C_k)^\ast$ to $C_k \setminus {\rm int} N$ consists of $6$ virtual paths on $(D_1 \cap C_k)^\ast$, say $\alpha_1, \dots, \alpha_6$.  Here we may assume that $\partial \alpha_i = \partial \alpha'_i$ for $i=1, \dots, 6$.  
Then $D_2 \cap C_k$ is obtained from $(D_1 \cap C_k)^\ast$ by detour moves replacing 
$\alpha_1, \dots, \alpha_6$ with $\alpha'_1, \dots, \alpha'_6$.   
By Lemma~\ref{generaldetourlemma}, we see that $(D_1 \cap C_k)^\ast$ is transformed into $D_2 \cap C_k$ by extended Reidemeister moves. Hence $D_1 \cap C_k$ is transformed into $D_2 \cap C_k$ by extended Reidemeister moves.  

\begin{figure}[h]
\centerline{
\includegraphics[width=12.0cm]{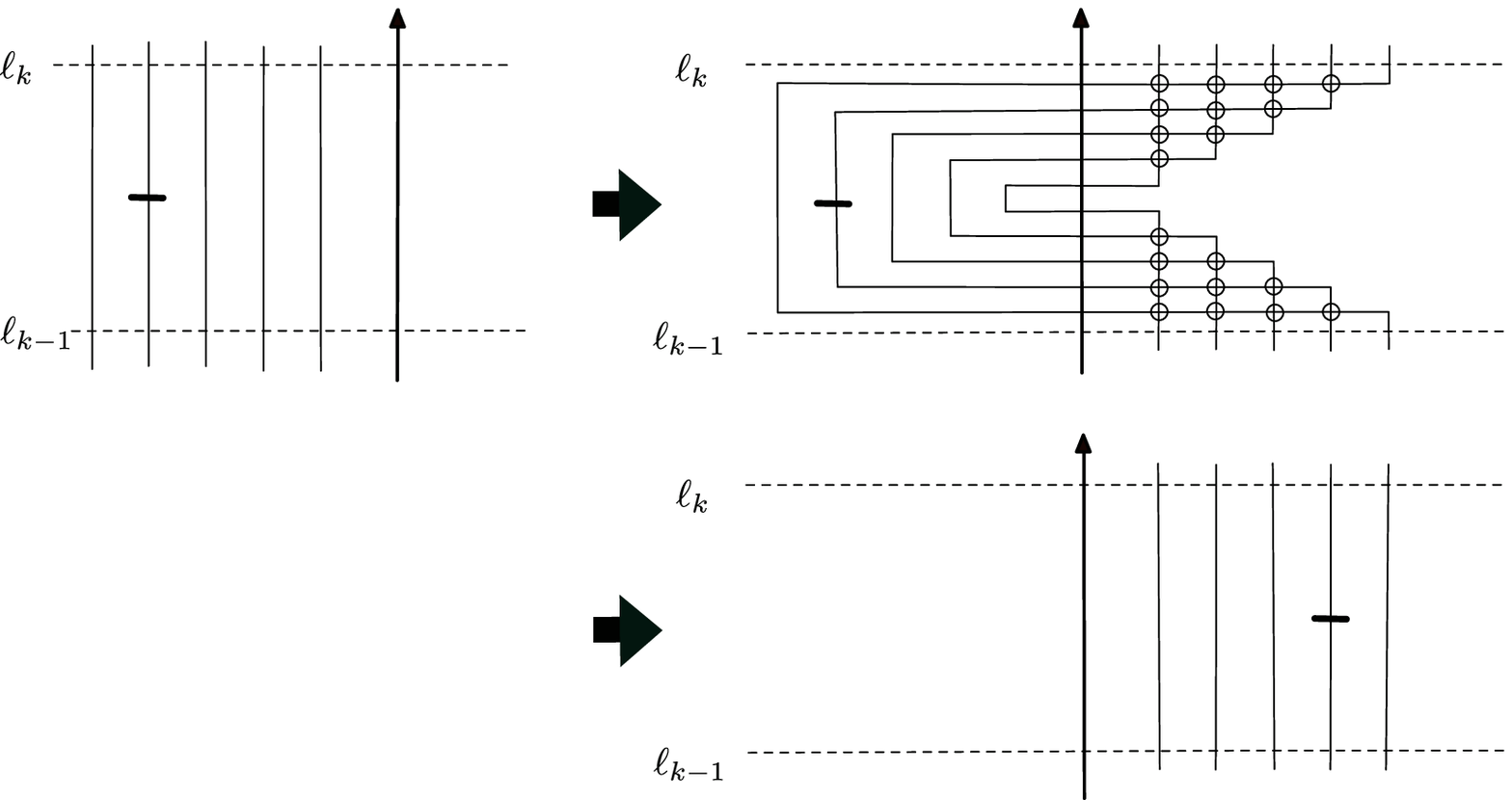}
}
\caption{}\label{fig:fgpbad}
\end{figure}

Apply this procedure for every chamber $C_k$ such that $D \cap C_k$ is of the form $M_{a,b}$, $m_{a,b}$, $V_{a,b}$ or $T_{a,b}$.  Then we obtain 
$D_2 \cap C_k$ of the form $M_{b,a}$, $m_{b,a}$, $V_{b,a}$ or $T_{b,a}$.

For each chamber $C_k$ such that $D \cap C_k$ is of the form $X^{\pm}_{a,b}$, we transform $D_1 \cap C_k$ into 
$D_2 \cap C_k$ which is 
the composition $V_{b,a} X^{\pm}_{b,a} V_{b,a}$
by extended Reidemeister moves in $C_k$ (see Figure~\ref{fig:fgpc} for $V_{b,a} X^+_{b,a} V_{b,a}$).  
We explain this procedure by using an example.  

\begin{figure}[h]
\centerline{
\includegraphics[width=4.0cm]{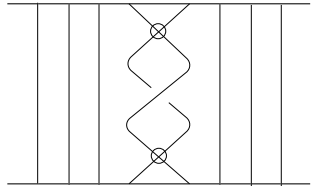}
}
\caption{$V_{b,a} X^+_{b,a} V_{b,a}$}\label{fig:fgpc}
\end{figure}

Consider a case that $D \cap C_k$ is of the form $X^+_{1,2}$. 
Figure~\ref{fig:fgpbacb} shows  $D \cap C_k$, $D_1 \cap C_k$ and $D_2 \cap C_k$.   
Let $N$ be a regular neighborhood on the classical crossing of $D_2 \cap C_k$ in $C_k$, and let ${\rm int}N$ denote the interior of $N$. The restriction of $D_2 \cap C_k$ to $C_k \setminus {\rm int} N$ consists of $7$ virtual paths on $D_2$, say $\alpha'_1, \dots, \alpha'_7$. 
Deform $D_1 \cap C_k$ by an isotopy of $C_k$ keeping $\partial C_k$ pointwise fixed and moving the classical crossing into $N$ 
  so that the new diagram, say $(D_1 \cap C_k)^\ast$, satisfies that $(D_1 \cap C_k)^\ast \cap N = D_2 \cap N$.  
The restriction of $(D_1 \cap C_k)^\ast$ to $C_k \setminus {\rm int} N$ consists of $7$ virtual paths on $(D_1 \cap C_k)^\ast$, say $\alpha_1, \dots, \alpha_7$. 
Here we may assume that $\partial \alpha_i = \partial \alpha'_i$ for $i=1, \dots, 7$.  
 Then $D_2 \cap C_k$ is obtained from $(D_1 \cap C_k)^\ast$ by detour moves replacing 
$\alpha_1, \dots, \alpha_7$ with $\alpha'_1, \dots, \alpha'_7$.   
By Lemma~\ref{generaldetourlemma}, we see that $(D_1 \cap C_k)^\ast$ is transformed into $D_2 \cap C_k$ by extended Reidemeister moves. Hence $D_1 \cap C_k$ is transformed into $D_2 \cap C_k$ by extended Reidemeister moves.  

\begin{figure}[h]
\centerline{
\includegraphics[width=12.0cm]{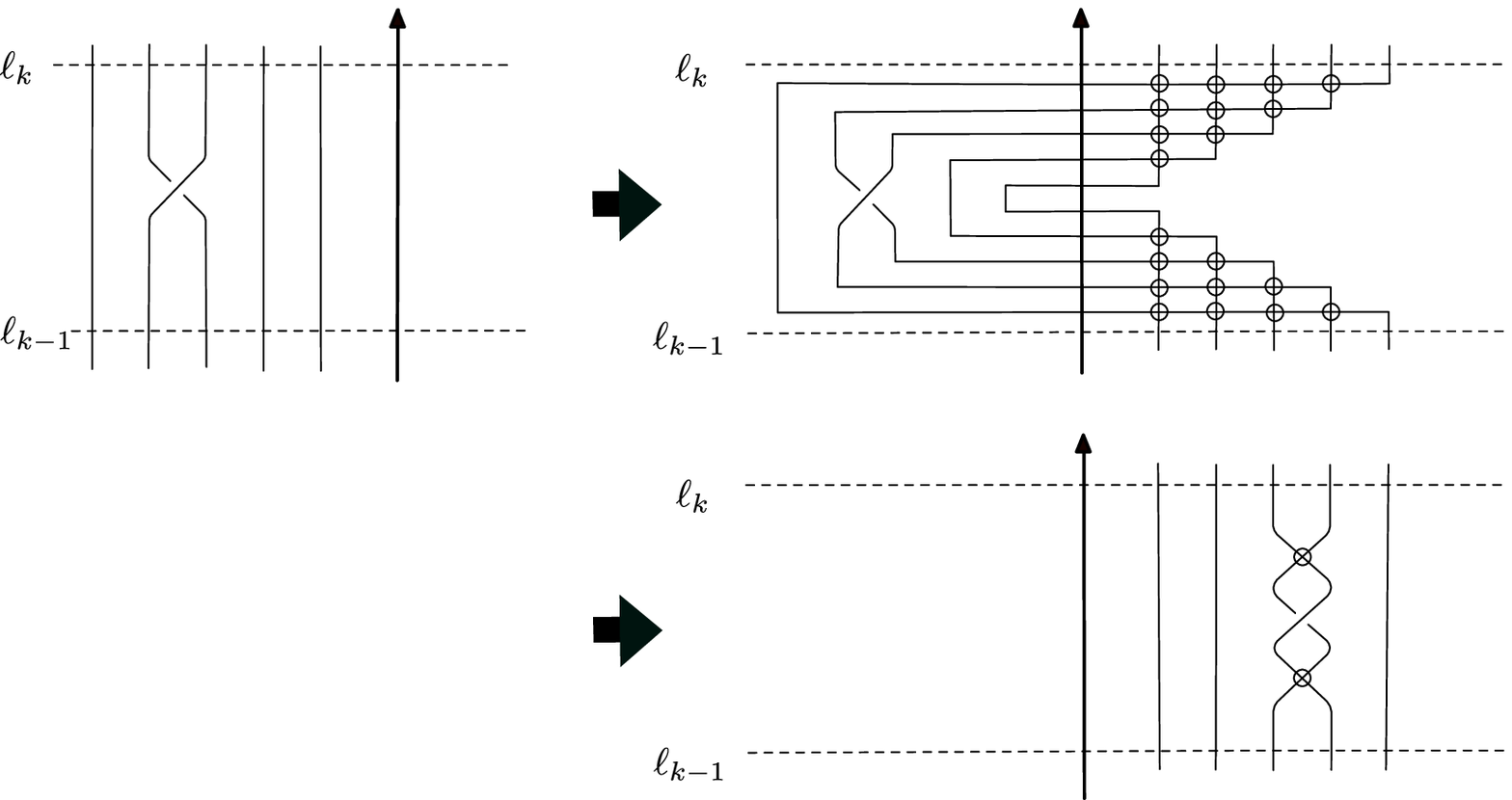}
}
\caption{}\label{fig:fgpbacb}
\end{figure}

Apply this procedure for every chamber $C_k$ such that $D \cap C_k$ is of the form $X^{\pm}_{a,b}$.  

Now the diagram $D_2$ obtained this way is equivalent to $D_1$  as a twisted link diagram, 
and hence it is equivalent to $D$.

We may assume that for each $k =1, \dots, m-1$, $D_2 \cap N(\ell_k)$ is the union of vertical arcs
$A'_{k,1}, \dots, A'_{k,d_k}$, where $A'_{k,j}$ is the image of $A_{k,j}$ under reflection along the $y$-axis.
See Figure~\ref{fig:fgpd} (Left), where $d_k = 4$, and $A'_{k,j}$ are denoted by $A'_j$.

\begin{figure}[h]
\centerline{
\includegraphics[width=10.0cm]{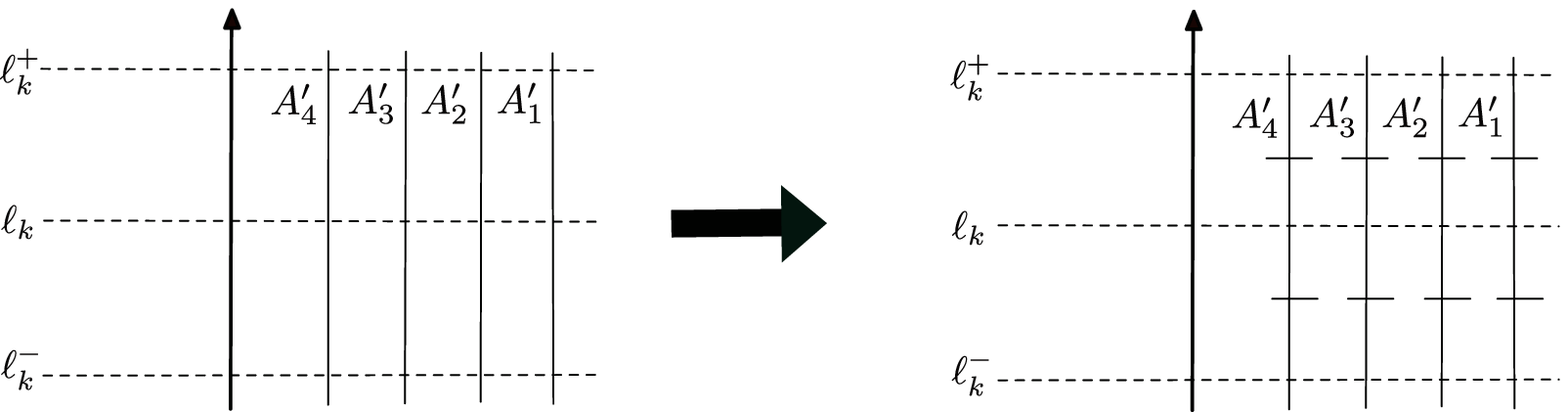}
}
\caption{}\label{fig:fgpd}
\end{figure}

Let $D_3$ be the diagram obtained from $D_2$ by adding a pair of bars on each $A'_{k,j}$,  for $k=1, \dots, m-1$ and $j=1, \dots, d_k$,  such that one of the pair lies in  $N(\ell_k) \cap C_k$ and the other lies in $N(\ell_k) \cap C_{k+1}$.  See Figure~\ref{fig:fgpd}.
In every chamber $C_k$ such that
$D_2 \cap C_k$ is of the form $M_{b,a}$, $m_{b,a}$, $V_{b,a}$ or $T_{b,a}$, the bars of $D_3 \cap C_k$ contained 
in $N(\ell_{k-1}) \cap C_k$ 
and those in $N(\ell_k) \cap C_k$ can be removed by using {\rm T2} (and {\rm T1} for $V_{b,a}$, and an isotopy for $T_{b,a}$).     
In every chamber $C_k$ such that
$D_2 \cap C_k$ is of the form $V_{b,a} X^{\pm}_{b,a} V_{b,a}$, we see that $D_3 \cap C_k$ 
can be transformed into $X^{\pm}_{b,a}$ by applying {\rm T3} or {\rm T3$'$}, followed by {\rm T2}.   Here {\rm T3$'$} is the move obtained from {\rm T3} by switching over/under information of the classical crossing, which is realized by {\rm V1}, \dots, {\rm V4} and {\rm T3} (Lemma~\ref{t3lemma}). 

Now we obtain $s(D)$, and we have shown that $D$ is equivalent to $s(D)$ as a twisted link diagram. 
\end{proof}

Given a twisted link diagram $D$, \cite{rkk7} describes a method for constructing a virtual link diagram $\widetilde{D}$, called the {\it double covering diagram} of $D$, and the following result is obtained.

\begin{thm}[\cite{rkk7}]\label{thm5}
Let $D$ and $D'$ be twisted link diagrams,  and let $\widetilde{D}$ and $\widetilde{D'}$ be double covering diagrams of  $D$ and $D'$, respectively.
If $D$ and $D'$ are equivalent as twisted link diagrams, then
$\widetilde{D}$ and $\widetilde{D'}$ are equivalent as virtual link diagrams.
\end{thm}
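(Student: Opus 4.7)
The plan is to verify invariance of the double covering construction under each of the extended Reidemeister moves. Since equivalence between twisted link diagrams is generated by single moves, it suffices to show that if $D$ and $D'$ differ by a single extended Reidemeister move, then $\widetilde{D}$ and $\widetilde{D'}$ are related by a finite sequence of classical and virtual Reidemeister moves; the general conclusion then follows by concatenating the resulting sequences along a given equivalence. It also suffices to check, for each move, only a single local model, since the double covering construction is itself local.

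First I would recall the local structure of $\widetilde{D}$: away from bars, $\widetilde{D}$ consists of two parallel copies of $D$ (with classical crossings duplicated as classical crossings and virtual crossings duplicated as virtual crossings, together with additional virtual crossings between the two sheets as needed to keep the picture planar), while near each bar of $D$ a specific local tangle is inserted that has the effect of interchanging the two sheets. This description reduces the behaviour of $\widetilde{D}$ under any local modification of $D$ to a finite diagrammatic check.

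For the non-twisted moves R1--R3 and V1--V4, the local picture lies in a region free of bars, and thus the double cover restricted to this region consists of two disjoint parallel copies of the local picture. The move on $D$ lifts to two simultaneous moves of the same type performed on the two sheets, each of which is itself a valid classical or virtual Reidemeister move on $\widetilde{D}$; this handles seven of the ten cases at once.

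The remaining cases are the twisted moves T1, T2, T3, where bars are present and activate the sheet-switching. For T2 (two adjacent bars cancel), the two sheet-swaps compose to the identity and $\widetilde{D}$ and $\widetilde{D'}$ agree up to an isotopy of $\mathbb{R}^{2}$. For T1 (a bar meeting a virtual crossing), the sheet-swapped local tangle on one side can be straightened to the other using V1--V4 together with the Detour Lemma. The main obstacle is T3, where a bar passes through a classical crossing: one must simultaneously track the over/under information at the crossing and the sheet-swap at the bar, and verify that the two sides of the move yield double covers related by a combination of R2, R3, and virtual moves. Once this remaining case is settled by an explicit diagrammatic check (comparing the two double-covered tangles up to Lemma~\ref{generaldetourlemma}), the theorem follows.
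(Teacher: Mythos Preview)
The paper does not prove Theorem~\ref{thm5}; it is quoted from \cite{rkk7} and used as a black box in the proof of Theorem~\ref{thm3}. So there is no ``paper's own proof'' to compare against here.

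That said, your strategy---checking invariance of the double covering under each generating move---is exactly the approach taken in \cite{rkk7}, and it is the right one. A few points in your recollection of the construction are inaccurate and would need to be fixed before the checks go through. First, the two sheets of $\widetilde{D}$ are not two parallel copies of $D$: as the present paper states just after Theorem~\ref{thm5}, for a bar-free $D$ one has $\widetilde{D}=D\sqcup s(D)$, so the second sheet carries the reflected diagram with all classical crossings switched. Consequently, for R1--R3 and V1--V4 the move lifts to one copy of the move on the $D$-sheet and the $s$-image of the move on the $s(D)$-sheet; this still works, since each Reidemeister move type is closed under $s$, but it is not literally ``two moves of the same type.'' Second, your T2 claim is too strong: cancelling two adjacent bars in $D$ does not make $\widetilde{D}$ and $\widetilde{D'}$ agree up to planar isotopy, because each sheet-swap introduces virtual crossings between the $D$- and $s(D)$-sheets, and undoing two consecutive swaps requires V2 moves (via the Detour Lemma), not merely isotopy. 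Third, your description of T3 (``a bar passes through a classical crossing'') does not match the move in Figure~\ref{fgrmoves}, which involves bars on both strands on each side of the crossing; getting the local model right is essential for the diagrammatic check you propose. With these corrections, the move-by-move verification does succeed, as carried out in \cite{rkk7}.
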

Therefore, for a twisted link $L$ represented by a diagram $D$, we may define the {\it double covering} $\widetilde{L}$ of $L$ to be the virtual link represented by $\widetilde{D}$, and there is a  map
$$ \{\text{twisted links}\} \rightarrow \{\text{virtual links}\}, \quad L \mapsto  \widetilde{L},   $$
called {\it the double covering}.  When $D$ is a virtual link diagram, it follows from the construction in \cite{rkk7}  that
the double covering diagram $\widetilde{D}$ is precisely $D \sqcup s(D)$.  Thus, for a virtual link $L$, the double covering $\widetilde{L}$ is $L \sqcup s(L)$.

Theorem~\ref{thm5} is used in the following proof of our main theorem.

\begin{proof}[Proof of Theorem~\ref{thm3}]
We first prove sufficiency.
Let $L$ and $L'$ be virtual links  that   are $s$-congruent.
Then there exist  maximal  split decompositions
$L = L_1 \sqcup \dots \sqcup L_n$ and $L' = L'_1 \sqcup \dots \sqcup L'_n$
such that for each $i=1, \dots, n$, $L'_i$ is equivalent to $L_i$ or $s(L_i)$ as a virtual link.
By Corollary~\ref{cor:vt}, $L'_i$ is equivalent to $L_i$ as a twisted link.
Thus $L'$ is equivalent to $L$ as a twisted link.

We next prove necessity. Let $L$ and $L'$ be virtual links  that   are equivalent  as twisted links. 
Since $L$ is a virtual link, the double covering $\widetilde{L}$ is the split union
$L \sqcup s(L)$.  Similarly, the double covering $\widetilde{L'}$ of $L'$ is the split union
$L' \sqcup s(L')$.  By Theorem~\ref{thm5}, $\widetilde{L} = L \sqcup s(L)$ is equivalent to
$\widetilde{L'} = L' \sqcup s(L')$ as a virtual link.

Let $L= L_1 \sqcup \dots \sqcup L_n$ and $L'= L'_1 \sqcup \dots \sqcup L'_{n'}$ be  maximal  split decompositions.
Then $L_1 \sqcup \dots \sqcup L_n \sqcup s(L_1) \sqcup \dots \sqcup s(L_n)$
is a  maximal  split decomposition of $\widetilde{L}$ and
$L'_1 \sqcup \dots \sqcup L'_{n'} \sqcup s(L'_1) \sqcup \dots \sqcup s(L'_{n'})$ is a  maximal  split decomposition of $\widetilde{L'}$.  By the uniqueness of a  maximal  split decomposition (Lemma~\ref{uniquemiximamdecomp}), we see that
$L$ and $L'$ are $s$-congruent.
\end{proof}

\bibliographystyle{plain}

\end{document}